\pgfplotsset{compat=1.3}
\newcommand{\Edgeset}{\mathbf{E}}
\newcommand{\Nodeset}{\mathbf{V}}
\newcommand{\1}{\boldsymbol{ \mathbbm{1}}}
\newcommand{\xr}{\mathrm{x}}
\newcommand{\yr}{\mathrm{y}}
\newcommand{\ur}{\mathrm{u}}
\newcommand{\vr}{\mathrm{v}}
\renewcommand{\wr}{\mathrm{w}} 
 \renewcommand{\vr}{\mathrm{v}} 
 \newcommand{\wb}{\mathbf{w}} 
\newcommand{\xb}{\mathbf{x}}
\newcommand{\xd}{\boldsymbol{x}}
\newcommand{\yb}{\mathbf{y}}
\newcommand{\yd}{\boldsymbol{y}}
\newcommand{\ub}{\mathbf{u}}
\newcommand{\ud}{\boldsymbol{u}}
\newcommand{\vb}{\mathbf{v}}
\newcommand{\vd}{\boldsymbol{v}}
\newcommand{\zb}{\boldsymbol{\upeta}}
\newcommand{\zd}{\boldsymbol{\eta}}
\newcommand{\zr}{\upeta}
\newcommand{\z}{\eta}
\newcommand{\mub}{\boldsymbol{\upmu}} 
\newcommand{\mud}{\boldsymbol{\mu}} 
\newcommand{\mur}{\upmu}
\newcommand{\zetab}{\boldsymbol{\upzeta}} 
\newcommand{\zetad}{\boldsymbol{\zeta}} 
\newcommand{\zetar}{\upzeta}
 \newcommand{\gammar}{\upgamma}
\newcommand{\kyi}[1]{k_{\yr,#1}}
\newcommand{\kyf}{\mathbf{k}_{\yr}}
 \newcommand{\bfpsi}{ \boldsymbol{ \psi} }
 \newcommand{\fb}{\boldsymbol{f}}
 \newcommand{\hb}{\boldsymbol{h}}
 \newcommand{\mc}{\mathcal}
 \newtheorem{theorem}{Theorem}[section]
 \newtheorem{proposition}[theorem]{Proposition}
 \newtheorem{corollary}[theorem]{Corollary}
 \newtheorem{definition}[theorem]{Definition}
 \newtheorem{lemma}[theorem]{Lemma}
 \newtheorem{remark}[theorem]{Remark}
 \newtheorem{assumption}[theorem]{Assumption}
\newcommand{\bea}{\begin{eqnarray}}
\newcommand{\eea}{\end{eqnarray}}
\newcommand{\beas}{\begin{eqnarray*}}
\newcommand{\eeas}{\end{eqnarray*}}
\newcommand{\leftm}{\left[\begin{array}}
\newcommand{\rightm}{\end{array}\right]}
\newcommand{\margin}[1]{\marginpar{\tiny\color{red} #1}}
\newcommand{\MBremove}[1]{\margin{removed by MB}}
\newcommand\oprocendsymbol{}
\newcommand\oprocend{\relax\ifmmode\else\unskip\hfill\fi\oprocendsymbol}
\begin{document}

\title{Duality and Network Theory in \\Passivity-based Cooperative Control}
\author[1]{Mathias B\"{u}rger}
\author[2]{Daniel Zelazo}
\author[1]{Frank Allg{\"o}wer}
\affil[1]{Institute for Systems
Theory and Automatic Control, University of Stuttgart, Pfaffenwaldring 9, 70550 Stuttgart, Germany {\tt \{mathias.buerger, frank.allgower\}@ist.uni-stuttgart.de}
}
\affil[2]{Faculty of Aerospace Engineering, Technion - Israel Institute of Technology, Haifa 32000, Israel {\tt dzelazo@technion.ac.il}.}

\maketitle

\begin{abstract}
This paper presents a class of passivity-based cooperative control problems that have an explicit connection to convex network optimization problems. 
The new notion of maximal equilibrium independent passivity is introduced and it is shown that networks of systems possessing this property asymptotically approach the solutions of a dual pair of network optimization problems, namely an \emph{optimal potential} and an \emph{optimal flow} problem. 
This connection leads to an interpretation of the dynamic variables, such as system inputs and outputs, to variables in a network optimization framework, such as divergences and potentials,
and reveals that several duality relations known in convex network optimization theory translate directly to passivity-based cooperative control problems.
The presented results establish a strong and explicit connection between passivity-based cooperative control theory on the one side and network optimization theory on the other, and
they provide a unifying framework for network analysis and optimal design. 
The results are illustrated on a nonlinear traffic dynamics model that is shown to be asymptotically clustering.
\end{abstract}

%
%
%
%

\section{Introduction}

One of the most profound concepts in mathematics is the notion of \emph{duality}.  In many ways, duality theory is the mathematical answer to the idiom ``there are two sides to every coin."  This powerful concept manifests itself across many mathematical disciplines, but perhaps the most elegant and complete notion of duality is the celebrated Lagrange duality in convex optimization theory \cite{Boyd2003}. 
One of the most complete expositions of this duality theory relates to a class of optimization problems over networks, generally known as \emph{network optimization} \cite{Rockafellar1998}, \cite{Bertsekas1998}.
In \cite{Rockafellar1998}, a unifying framework for network optimization was established, with the key elements being a pair of dual optimization problems: the \emph{optimal network flow problem} and the \emph{optimal potential problem}. This dual pair of optimization problems characterizes the majority of network decision problems. 
The notion of duality also has a long history within the theory of control systems, as control problems are often intimately related to optimization problems and their respective duals have again a controls interpretation, see e.g. \cite{Balakrishnan1995}. 

A recent trend in modern control theory is the study of cooperative control problems amongst groups of dynamical systems that interact over an information exchange network.  A fundamental goal for the analysis of these systems is to reveal the interplay between properties of the individual dynamic agents, the underlying network topology, and the interaction protocols that influence the functionality of the overall system  \cite{Mesbahi2010}. 
%
%
Amongst the numerous control theoretic approaches being pursued to define a general theory for networks of dynamical systems, \emph{passivity} \cite{Willems1972} takes an outstanding role; see e.g., \cite{Bai2011}. 
The conceptual idea underlying passivity-based cooperative control is to separate the network analysis and synthesis into two layers. 
On the systems layer, each dynamical system comprising the network is designed to have a certain input-output behavior, namely passivity.
Then, the complete network can be analyzed by considering only the input-output behavior of the individual systems and the network topology describing their interconnections. 
This conceptual idea was proposed in \cite{Arcak2007}, where a passivity-based framework for group coordination problems was established. Following this, passivity was used in \cite{Zelazo2009b} to derive performance bounds on the input/output behavior of consensus-type networks. 
Passivity is also widely used in coordinated control of robotic systems \cite{Chopra2006} and the teleoperation of UAV swarms \cite{Franchi2011}. Passivity-based cooperative control with quantized measurements is, for example, studied in \cite{DePersis2012a}.
The related concepts of incremental passivity or relaxed co-coercivity have been used to study various synchronization problems, see \cite{Stan2007} or \cite{Scardovi2010}, respectively. 
Passivity was also used in the context of Port-Hamiltonian systems on graphs in \cite{Schaft2012}.
A passivity framework was used to study \emph{clustering} in networks of scalar dynamical systems with saturated couplings \cite{Burger2011}, \cite{Burger2011a}, \cite{Burger2012}.

In \cite{Hines2011}, a new definition of passivity was introduced that serves the needs of networked systems. In particular, \emph{equilibrium independent passivity} was introduced to characterize dynamical systems that are passive with respect to an arbitrary equilibrium point that can be generated by a constant input signal. Equilibrium independent passivity enables a convergence analysis of dynamical networks without computing the convergence point a priori. A similar passivity concept can be found in \cite{Jayawardhana2007}.

The passivity-based cooperative control framework and the network optimization framework have many modeling similarities, as both rely on a certain matrix description of the underlying network (i.e., the incidence matrix). However, to the best of our knowledge, an explicit connection between these two research areas has not yet been established, and this motivates the main thesis of this work:  \textit{Does the cooperative control framework inherit any of the duality results found in network optimization}?  

We present in this paper a class of networks consisting of dynamical systems with certain passivity properties that are intimately related to the network optimization theory of \cite{Rockafellar1998} and admit similar duality interpretations. Our results build an analytic bridge between cooperative control theory and network optimization theory.

The contributions are as follows.
Building upon the existing work on passivity-based cooperative control and in particular on  \cite{Hines2011}, we introduce a refined version of equilibrium independent passivity, named \emph{maximal equilibrium independent passivity}. 
The new definition is motivated by the fact that the original definition excludes some important systems, such as integrators.
Maximal equilibrium independent passive systems are passive for different steady-state input-output configurations and the equilibrium input-output maps are allowed to be \emph{maximal monotone relations}.
A cooperative control framework involving maximal equilibrium independent passive systems that aim to reach output agreement is considered. 
First, necessary conditions for an output agreement solution to exist are derived, considering only the systems dynamics and the network topology. 
It it then shown that any steady-state configuration that satisfies these necessary conditions is \emph{inverse optimal}, in the sense that the corresponding steady-state input solves a certain optimal flow problem and the steady-state output solves the corresponding dual optimal potential problem.
Exploiting this connection, certain results on the existence and uniqueness of output agreement solutions are derived using tools from convex optimization theory.
Following this, conditions on the couplings are derived that ensure the output agreement steady-state is realized. Maximal equilibrium independent passivity turns out to be again the central concept, and by using convex analysis, conditions are derived that ensure that the desired steady-state input can be generated for all possible network configurations.
The dynamic state and the output variable of the couplings are also shown to be inverse optimal with respect to a dual pair of network optimization problems.

Following the discussion on output agreement problems, the inverse optimality and duality results are generalized to a broader class of networks of maximal equilibrium independent passive systems.
The general results are used to analyze a nonlinear traffic dynamic model, that is shown to exhibit asymptotically a clustering behavior.

The reminder of the paper is organized as follows. 
The network optimization framework of \cite{Rockafellar1998} is reviewed in Section \ref{sec.NetworkTheory}.
In Section \ref{sec.Problem} the dynamical network model is introduced, some results on passivity-based cooperative control are reviewed and the new notion of maximal equilibrium independent passivity is introduced.
The connection to network optimization theory is established in Section \ref{sec.OutputAgreement}. First, necessary conditions for the existence of an output agreement solution are derived and inverse optimality and duality results for those solutions are presented.
The inverse optimality results are then generalized to networks of maximal equilibrium independent passive systems in Section \ref{sec.NetworkAnalysis}.
The theoretical results are illustrated on a nonlinear traffic dynamics model in Section \ref{sec.Applications}.  

\subsection*{Preliminaries}
A function $\phi : \mathbb{R}^{n} \mapsto \mathbb{R}^{n}$ is said to be
 \emph{strongly monotone} on $\mc{D}$ if there exists $\alpha >0$ such that
 $ (\phi(\eta) - \phi(\xi))^{\top}(\eta - \xi) \geq \alpha \|\eta - \xi\|^{2}$
 for all $\eta, \xi \in \mc{D}$, and
 \emph{co-coercive} on $\mc{D}$ if there exists $\gamma >0$ such that
 $ (\phi(\eta) - \phi(\xi))^{\top}(\eta - \xi) \geq \gamma \|\phi(\eta) - \phi(\xi)\|^{2}$
 for all $\eta, \xi \in \mc{D}$, see, e.g., \cite{Zhu1995}. 
A function $\Phi : \mathbb{R}^{q} \mapsto \mathbb{R}$ is said to be 
\emph{convex} on a convex set $\mc{D}$ if for any two points $\eta, \xi \in \mc{D}$ and for all $\lambda \in [0,1]$,
  $ \Phi(\lambda \eta + (1-\lambda)\xi) \leq \lambda \Phi(\eta) + (1-\lambda) \Phi(\xi).$
  It is said to be \emph{strictly convex} if the inequality holds strictly and  \emph{strongly convex} on $\mc{D}$ if there exists $\alpha > 0$ such that
  for any two points $\eta, \xi \in \mc{D},$ with $\eta \neq \xi$, and for all $\lambda \in [0,1]$
   $$ \Phi(\lambda \eta + (1-\lambda)\xi) < \lambda \Phi(\eta) + (1-\lambda) \Phi(\xi) -  \frac{1}{2} \lambda (1-\lambda)\alpha \|\eta - \xi\|^{2}.$$

The \emph{convex conjugate} of a convex function $\Phi$, denoted $\Phi^{\star}$, is defined as \cite{Rockafellar1997}:
\begin{align} \label{eqn.Conjugate}
\Phi^{\star}(\xi) = \sup_{\eta \in \mc{D}} \{ \eta^{\top} \xi - \Phi(\eta) \} = - \inf_{\eta \in \mc{D}} \{ \Phi(\eta) - \eta^{\top} \xi \}.
\end{align}
The definition of a convex conjugate implies that for all $\eta$ and $\xi$ it holds that $\Phi(\eta) + \Phi^{\star}(\xi) \geq \eta^{\top}\xi.$ 
A vector $g$  is said to be a \emph{subgradient} of a function $\Phi$ at $\eta$ if $\Phi(\eta') \geq \Phi(\eta) + g^{\top}(\eta' - \eta)$. 
The set of all subgradients of $\Phi$ at $\eta$ is called the \emph{subdifferential} of $\Phi$ at $\eta$ and is denoted by $\partial \Phi(\eta)$. The multivalued mapping $\partial \Phi :  \eta \rightarrow \partial \Phi(\eta)$ is called the subdifferential of $\Phi$, see \cite{Rockafellar1997}.

A special convex function we employ is the \emph{indicator function}. Let $\mc{C}$ be a closed, convex set, the indicator function is defined as 
\begin{align*}
I_{\mathbb{C}}(\eta) = \begin{cases}
0 & \mbox{if}\; \eta \in \mc{C} \\[-0.5em]
+\infty & \mbox{if}\; \eta \notin \mc{C}.
\end{cases}
\end{align*} 
We will also use the indicator function for points, e.g., $I_{0}(\eta)$ as the indicator function for $\eta = 0$.

Given a control system $\dot{x} = f(x,u)$, with state $x \in \mathbb{R}^{p}$ and input $u \in \mathbb{R}^{q}$, and a function $S(x)$ mapping $\mathbb{R}^{p}$ to $\mathbb{R}$, the \emph{directional derivative} of $S$ is denoted by $\dot{S} =  \frac{\partial S}{\partial x} f(x,u)$.

%
%
%
%
%
%
%

\section{Network Optimization Theory} \label{sec.NetworkTheory}

The objective of this paper is to study passivity-based cooperative control in the context of \emph{network optimization theory} \cite{Rockafellar1998}. 
A \emph{network} is described by a \emph{graph} $\mc{G}=(\Nodeset,\Edgeset)$, consisting of a finite set of
\emph{nodes}, $\Nodeset=\{v_1,\ldots, v_{|\Nodeset|}\}$ and a finite set of \emph{edges}, $\Edgeset=\{e_1, \ldots, e_{|\Edgeset|}\}$, describing the incidence relation between
pairs of nodes. 
Although we consider $\mc{G}$ in the cooperative control problem as an \emph{undirected graph}, we assign to each edge an arbitrary orientation. The notation $e_{k} = (v_{i},v_{j}) \in \Edgeset \subset \Nodeset \times \Nodeset$ indicates that $v_{i}$ is the initial node of edge $e_{k}$ and $v_{j}$ is the terminal node. For simplicity, we will abbreviate this with $k = (i,j)$, and write $k \in \Edgeset$ and $i,j \in \Nodeset$.

The \emph{incidence matrix} $E \in \mathbb{R}^{|\Nodeset | \times |\Edgeset |}$ of the
graph $\mc{G}$ with arbitrary orientation, is a $\{0, \pm 1\}$ matrix with the rows and columns indexed
by the nodes and edges of $\mc{G}$ such that $[E]_{ik}$ has value
`+1' if node $i$ is the initial node of edge $k$, `-1' if it is the terminal
node, and `0' otherwise. This definition implies that for any graph, $\1^{\top} E = 0$, where $\1 \in \mathbb{R}^{|\Nodeset|}$ is the vector of all ones. 
We refer to the \emph{circulation space} of $\mc{G}$ as the null space $\mc{N}(E)$, and the \emph{differential space} of $\mc{G}$ as the range space $\mc{R}(E^{\top})$; see \cite{Rockafellar1998}. 
Additionally, we call $\mc{N}(E^{\top})$ the \emph{agreement space}. 
Note that $\mc{N}(E^{\top})\perp\mc{R}(E)$ and $\mc{N}(E)\perp\mc{R}(E^{\top})$.

We call a vector $\mub = [\mur_{1}, \ldots, \mur_{|\Edgeset|}]^{\top} \in \mathbb{R}^{|\Edgeset|}$ a \emph{flow} of the network $\mc{G}$. An element of this vector, $\mur_{k}$, is the \emph{flux} of the edge $k \in \Edgeset$.
The incidence matrix can be used to describe a type of conservation relationship between the flow of the network along the edges and the net in-flow (or out-flow) at each node in the network, termed the \emph{divergence} of the network $\mc{G}$.  The net flux entering a node must be equal to the net flux leaving the node.  The divergence associated with the flow $\mub$ is denoted by the vector $\ub = [\ur_{1}, \ldots, \ur_{|\Nodeset|}]^{\top} \in \mathbb{R}^{|\Nodeset|}$ and can be represented as\footnote{This condition is \emph{Kirchhoff's Current Law}.}
\begin{align}\label{flow_conservation}
\ub + E\mub = 0.
\end{align}

Borrowing from electrical circuit theory, we call the vector $\yb \in \mathbb{R}^{|\Nodeset|}$ a \emph{potential} of the network $\mc{G}$. 
%
To any edge $k = (i,j)$, one can associate the \emph{potential difference} as
$
\zetar_{k} = \yr_{j} - \yr_{i};
$
we also call this the \emph{tension} of the edge $k$. The \emph{tension vector} $\zetab = [\zetar_{1}, \ldots, \zetar_{|\Edgeset|}]^{\top}$, can be expressed as\footnote{This condition is \emph{Kirchoff's Voltage Law}}
\begin{align}\label{network_tension}
\zetab = E^{\top}\yb.
\end{align}
Flows and tensions are related to potentials and divergences by the \emph{conversion formula} 
$
\mub^{\top}\zetab = -\yb^{\top}\ub.
$
Network theory broadly connects elements of graph theory to a family of convex optimization problems. 
 The beauty of this theory is that it admits elegant and simple duality relations.

The \emph{optimal flow problem} attempts to optimize the flow and divergence in a network subject to the conservation constraint (\ref{flow_conservation}).  Each edge is assigned a \emph{flux cost}, $C_k^{flux}(\mur_k)$, and each node is assigned a \emph{divergence cost} $C_{i}^{div}(\ur_{i})$, i.e.,
 \begin{align}
 \begin{split} \label{prob.OFP_Basic}
 \min_{\ub, \mub} \quad& \sum_{i=1}^{|\Nodeset|}C_{i}^{div}(\ur_{i}) + \sum_{k=1}^{|\Edgeset|}C_{k}^{flux}(\mur_{k}) \\
 \mbox{s.t.\quad} & \ub + E\mub = 0.
 \end{split}
 \end{align}

 The problem \eqref{prob.OFP_Basic} admits a dual problem with a very characteristic structure.
To form the dual problem, one can replace the divergence $\ur_{i}$ and flow $\mur_{k}$ variables in the objective functions with artificial variables $\tilde{\ur}_{i}$ and $\tilde{\mur}_{k}$, respectively, and introduce the artificial constraints $\ur_{i} = \tilde{\ur}_{i}$, $\mur_{k} = \tilde{\mur}_{k}$.  These artificial constraints can be dualized with Lagrange multipliers $\yr_{i}$ and $\zetar_{k}$, respectively. 
The objective functions of the dual problem turn out to be the convex conjugates of the original cost functions, i.e., 
$$
C_{i}^{pot}(\yr_{i}) := C_{i}^{div, \star}(\yr_i) = -\inf_{\tilde{\ur}_{i}} \{ C_{i}^{div}(\tilde{\ur}_{i}) - \yr_{i}\tilde{\ur}_{i} \}
$$ 
and
$
C_{k}^{ten}(\zetar_{i}) := C_{k}^{flux, \star}(\zetar_i).
$
In the dual, the linear constraint  $\zetab = E^{\top}\yb$ must hold, such that is becomes an \emph{optimal potential problem}
\begin{align}
\begin{split} \label{prob.OPP_Basic}
\min_{\yb, \zetab} &\quad \sum_{i=1}^{|\Nodeset|} C^{pot}_{i}(\yr_{i}) + \sum_{k=1}^{|\Edgeset|}C_{k}^{ten}(\zetar_{k}), \\ \mbox{s.t.}& \quad \zetab = E^{\top}\yb.
\end{split}
\end{align}
We provide in the sequel an interpretation of cooperative control problems for a certain class of passive systems in the context of the network optimization theory. 

\section{Passivity-based Cooperative Control} \label{sec.Problem} 
The basic model involving networks of passive dynamical systems with diffusive couplings is now introduced. A new notion of passivity, called \emph{maximal equilibrium independent passivity}, is presented and we demonstrate it to be a well-suited concept for cooperative control.
\subsection{A Canonical Dynamic Network Model}
Networks of dynamical systems defined on an undirected graph $\mc{G} = (\Nodeset,  \Edgeset)$ are considered where each node represents a single-input single-output (SISO) system 
\begin{align}\label{sys.Node}
\begin{split}
\Sigma_{i}: \hspace{2em} \dot{x}_{i}(t) &= f_{i}(x_{i}(t),u_{i}(t),\wr_{i}), \\
  y_{i}(t) &= h_{i}(x_{i}(t),u_{i}(t),\wr_{i}), \qquad i \in \Nodeset,
\end{split}
\end{align}
with state $x_{i}(t) \in \mathbb{R}^{p_{i}}$, input $u_{i}(t) \in \mathbb{R}$, output $y_{i}(t) \in \mathbb{R}$ and constant external signal $\wr_{i}$.
In the following, we adopt the notation $\yd(t) = [y_{1}(t), \ldots, y_{|\Nodeset|}(t)]^{\top}$ and 
$\ud(t) = [u_{1}(t), \ldots, u_{|\Nodeset|}(t)]^{\top}$ for the stacked output and input vectors.  Similarly, we use $\xd(t) \in \mathbb{R}^{\sum_{i=1}^{|\Nodeset|}p_{i}}$ for the stacked state vector, $\wb $ for the external signals and write $\dot{\xd} = \fb(\xd,\ud,\wb), \yd = \hb(\xd,\ud,\wb)$ for the complete stacked dynamical system.
\begin{figure}[!t]

\begin{center}
\includegraphics[trim= 8cm 11.5cm 7.5cm 11cm, width=0.3\textwidth]{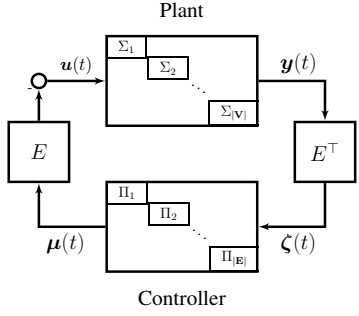}
\caption{Block-diagram of the canonical passivity-based cooperative control structure.}
\label{fig.BlockDiagram}
\end{center}
\end{figure} 
To each edge $k \in \Edgeset,$ connecting two nodes $i,j \in \Nodeset$, we associate the relative output $\zeta_{k}(t) = y_{i}(t) -y_{j}(t)$. The relative outputs can be defined with the incidence matrix as
\begin{align} \label{eqn.Interconnectionzeta}
\zetad(t) = E^{\top}\yd(t).
\end{align}
The relative outputs $\zetad(t)$ drive dynamical systems placed on the edges of $\mc{G}$ that are of the form
\begin{align} \label{sys.Controller1}
\begin{split}
\Pi_{k}: \quad \dot{\z}_{k}(t) &=\zeta_{k}(t),  \\
 \mu_{k}(t) &= \psi_{k}(\z_{k}(t)), \quad k \in \Edgeset.
 \end{split}
\end{align}
The nonlinear output functions  $\psi_{k}$ will be specified later on. The systems \eqref{sys.Controller1} will in the following be called controllers.
The output of a controller $\Pi_{k}$ influences the two incident systems as
\begin{align} \label{eqn.Interconnectionu}
\ud(t) = -E\mud(t).
\end{align}

The complete dynamical network \eqref{sys.Node}, \eqref{eqn.Interconnectionzeta},  \eqref{sys.Controller1}, \eqref{eqn.Interconnectionu} is illustrated in Figure \ref{fig.BlockDiagram}.

\begin{remark}
The model  \eqref{sys.Node}, \eqref{eqn.Interconnectionzeta},  \eqref{sys.Controller1}, \eqref{eqn.Interconnectionu}  includes the class of \emph{diffusively coupled networks} of the form
\begin{align*}
\ddot{\chi}_{i} =  f_{i}(\dot{\chi}_{i}) + \wr_{i} + \sum_{j \in \mathcal{N}_{i}}\psi_{ij}(\chi_{j}-\chi_{i}),
\end{align*}
where $\chi_{i} \in \mathbb{R}$ is a dynamical state and $\mathcal{N}_{i}$ is the set of neighbors of node $i$ in $\mc{G}$. If the nonlinear diffusive couplings $\psi_{ij} (\chi_{j}-\chi_{i})$ are realized by odd functions and $\psi_{ij} = \psi_{ji}$, then the system can be represented in the form \eqref{sys.Node}, \eqref{eqn.Interconnectionzeta},  \eqref{sys.Controller1}, \eqref{eqn.Interconnectionu}, with $x_{i} = \dot{\chi}$. This model has evolved as a standard model, e.g., studied in \cite{Arcak2007}, where passivity was identified as a central concept for convergence analysis.
\end{remark}

\begin{remark}
The model \eqref{sys.Node}, \eqref{eqn.Interconnectionzeta},  \eqref{sys.Controller1}, \eqref{eqn.Interconnectionu} is also closely related  \emph{Hamiltonian systems on graphs}, as studied in \cite{Schaft2012}. Suppose there exists a Hamiltonian function
 $H : \mathbb{R}^{|\Edgeset|} \times \mathbb{R}^{|\Nodeset|} \rightarrow \mathbb{R}$, then a port-Hamiltonian system on a graph takes the form
\begin{align}
\begin{bmatrix}
\dot{\zd}(t) \\ \dot{\xd} (t)
\end{bmatrix} = 
\begin{bmatrix}
0 & E^{\top} \\
-E & -D
\end{bmatrix}
\begin{bmatrix}
\frac{\partial H}{\partial \zd}(\zd(t),\xd(t)) \\
\frac{\partial H}{\partial \xd}(\zd(t),\xd(t))
\end{bmatrix}
+ 
\begin{bmatrix}
0 \\
G
\end{bmatrix} \wr.
\end{align} 
 The matrix $D$ is a positive semidefinite ``damping'' matrix. If $D$ is a diagonal matrix, and $\frac{\partial H}{\partial \zd}(\zd(t),\xd(t))$  and  $\frac{\partial H}{\partial \xd}(\zd(t),\xd(t))$ are solely functions of $\zd(t)$ and $\xd(t)$, respectively, then the model is  in the form \eqref{sys.Node}, \eqref{eqn.Interconnectionzeta},  \eqref{sys.Controller1}, \eqref{eqn.Interconnectionu}.\footnote{It is pointed out in \cite{Schaft2012} that the Hamiltonian function normally splits into a sum of functions on the edges and on the nodes. In this case, the model corresponds to \eqref{sys.Node}, \eqref{eqn.Interconnectionzeta},  \eqref{sys.Controller1}, \eqref{eqn.Interconnectionu}.}
\end{remark}

\subsection{Passivity as a Sufficient Condition for Convergence}
A common theme in the existing literature is to exploit passivity properties of the dynamical systems for a convergence analysis. The convergence results can be traced back to well-known feedback theorems \cite{Khalil2002}, and we review a basic convergence result here.
From here on we use the notational convention that italic letters denote dynamic variables, e.g., $y(t)$, and letters in normal font denote constant signals, e.g., $\yr$.

\begin{assumption} \label{ass.Passivity}
 There exist constant signals $\ub$, $\yb$, $\mub$, $\zetab$ such that $\ub = - E \mub$, $\zetab = E^{\top}\yb$ and
\begin{enumerate}[i)]
 \item  each dynamic system \eqref{sys.Node} is output strictly passive with respect to $\ur_{i}$ and $\yr_{i}$, i.e., there exists a positive semi-definite storage function $S_{i}(x_{i}(t))$ and a constant $\rho_{i} > 0$ such that
\begin{align} \label{eqn.Passivity1}
 \dot{S}_{i} \leq - \rho_{i}\|y_{i}(t) - \yr_{i}\|^{2} + (y_{i}(t) - \yr_{i})(u_{i}(t) - \ur_{i});
\end{align}
 \item each controller  \eqref{sys.Controller1} is passive with respect to $\zetar_{k}$ and $\mur_{k}$, i.e., there exists a positive semi-definite storage function $W_{k}(\z_{k}(t))$ such that
\[
 \dot{W}_{k} \leq (\mu_{k}(t) - \mur_{k})(\zeta_{k}(t) - \zetar_{k}).
\]
\end{enumerate}
\end{assumption}

Now, the basic convergence result follows directly.
\begin{theorem}[{\small Convergence of Passive Networks}] \label{thm.BasicConvergence} Consider the dynamical network \eqref{sys.Node}, \eqref{eqn.Interconnectionzeta}, \eqref{sys.Controller1}, \eqref{eqn.Interconnectionu} and suppose Assumption \ref{ass.Passivity} holds, then the output variables $\yd(t)$ converge to a constant steady-state value $\yb$, i.e., $\lim_{t \rightarrow \infty} \yd(t) \rightarrow \yb$.
\end{theorem}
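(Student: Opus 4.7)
The natural approach is to construct a composite storage function for the full network by summing the node storage functions and the edge storage functions, and then to exploit the algebraic structure of the interconnection \eqref{eqn.Interconnectionzeta}, \eqref{eqn.Interconnectionu} to cancel all indefinite cross terms, leaving a sign-definite residual that drives the outputs to $\yb$.

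Concretely, the first step is to introduce the candidate Lyapunov function
\[
V(\xd,\zd) = \sum_{i \in \Nodeset} S_i(x_i) + \sum_{k \in \Edgeset} W_k(\z_k),
\]
which is nonnegative by Assumption \ref{ass.Passivity}. Differentiating along trajectories and plugging in the two passivity inequalities from Assumption \ref{ass.Passivity} yields
\[
\dot V \;\le\; -\sum_{i} \rho_i\,\|y_i(t)-\yr_i\|^2 \;+\; (\yd(t)-\yb)^{\top}(\ud(t)-\ub) \;+\; (\mud(t)-\mub)^{\top}(\zetad(t)-\zetab).
\]
The key algebraic step is then to substitute $\ud(t)-\ub = -E(\mud(t)-\mub)$ and $\zetad(t)-\zetab = E^{\top}(\yd(t)-\yb)$, which follow directly from \eqref{eqn.Interconnectionzeta}, \eqref{eqn.Interconnectionu} together with the assumed steady-state relations $\ub=-E\mub$ and $\zetab=E^{\top}\yb$. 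The two cross terms then become negatives of one another and cancel exactly, leaving
\[
\dot V \;\le\; -\sum_{i\in\Nodeset} \rho_i\,\|y_i(t)-\yr_i\|^2 \;\le\; 0.
\]

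Having $\dot V \le 0$ implies that $V$ is nonincreasing along trajectories, hence trajectories remain in the sublevel set $\{V \le V(\xd(0),\zd(0))\}$; this gives precompactness of the trajectory (invoking, as is standard in this setting, properness of the storage functions on the relevant level sets). LaSalle's invariance principle then guarantees that trajectories approach the largest invariant set contained in $\{(\xd,\zd) : y_i = \yr_i \text{ for all } i\}$. On this set one has $\yd(t) \equiv \yb$, so $\lim_{t\to\infty}\yd(t)=\yb$, which is the claim. (Alternatively, since $V$ is bounded below and $\dot V$ is uniformly continuous in $t$ under mild regularity of $f_i,h_i,\psi_k$, Barbalat's lemma yields $y_i(t)\to \yr_i$ directly.)

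The main obstacle, and the only nontrivial aspect of the argument, is the algebraic cancellation of the two cross terms: this is the structural identity that the incidence-matrix interconnection makes the node-side and edge-side supply rates skew-adjoint, and it is exactly what lets the edge passivity (without strictness) combine with output-strict node passivity to yield a strict decrease along $\yd-\yb$. Beyond this, the remaining issues are technical: ensuring that the storage functions give precompact trajectories, and applying LaSalle/Barbalat; these are standard and do not require any property beyond Assumption \ref{ass.Passivity} and basic regularity of the vector fields.
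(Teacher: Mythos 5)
Your proof is correct and follows essentially the same route as the paper: both sum the node and edge storage functions, use the incidence-matrix interconnection to make the node-side cross term $(\yd-\yb)^{\top}(\ud-\ub)$ equal to $-(\zetad-\zetab)^{\top}(\mud-\mub)$ so that it is absorbed by the controller passivity inequality, and then conclude $\dot V \le -\sum_i\rho_i\|y_i(t)-\yr_i\|^2$ and invoke Barbalat's lemma. The paper phrases the cancellation as bounding the cross term by $-\sum_k\dot W_k$ and moving it to the left rather than as an exact cancellation in a composite $V$, but this is the same argument.
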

\begin{proof}
The passivity condition implies that
\begin{align*}
\sum_{i=1}^{|\Nodeset|} \dot{S}_{i} &\leq   -\sum_{i=1}^{|\Nodeset|}\rho_{i}\|y_{i}(t) - \yr_{i}\|^{2} + (\yd(t) - \yb)^{\top}(\ud(t) - \ub) \\
&=  -\sum_{i=1}^{|\Nodeset|}\rho_{i}\|y_{i}(t) - \yr_{i}\|^{2} - (\zetad(t) - \zetab)^{\top}(\mud(t) - \mub) \\
&\leq  -\sum_{i=1}^{|\Nodeset|}\rho_{i}\|y_{i}(t) - \yr_{i}\|^{2} - \sum_{k=1}^{|\Edgeset|}\dot{W}_{k}.
\end{align*} 
One can bring  $\sum_{k=1}^{|\Edgeset|}\dot{W}_{k}$ to the left of the inequality and invoking Barbalat's lemma \cite{Khalil2002} to conclude convergence, i.e.,  $\lim_{t \rightarrow \infty} \| \yd(t) - \yb\| \rightarrow 0$.
\end{proof}

The appeal of this convergence result is that it decouples the dynamical systems layer and the network layer. Only the input-output behavior must be shown to be passive to conclude convergence of the overall network.

\subsection{Equilibrium Independent Passivity}

A critical aspect of the previous result relates to the assumption on the existence of the constant signals $\ub$, $\yb$, $\mub$, $\zetab$  that satisfy Assumption \ref{ass.Passivity}. The equilibrium configuration depends on the properties of all systems in the network and the desired passivity property cannot be verified locally. 
To overcome this issue, the concept of \emph{equilibrium independent passivity} was introduced  in \cite{Hines2011}. Equilibrium independent passivity requires a system to be passive independent of the equilibrium point to which it is regulated. 

\begin{definition}[\cite{Hines2011}] The system \eqref{sys.Node} is said to be \emph{(output strictly) equilibrium independent passive} if there exists a set $\mc{U}_{i} \subset \mathbb{R}$ and a continuous function $k_{x,i}(\ur)$, defined on $\mc{U}_{i}$, such that
 i) for any constant signal $\ur_{i} \in \mc{U}_{i}$  the constant signal $\xr_{i} = k_{x,i}(\ur_{i})$ is an equilibrium point of \eqref{sys.Node}, i.e., $0 = f_{i}(\xr_{i},\ur_{i},\wr_{i})$, and 
 ii) the system is passive with respect to $\ur_{i}$ and $\yr_{i} = h_{i}(k_{x,i}(\ur),\ur_{i},\wr_{i})$; that is, for each $\ur_{i} \in \mc{U}_{i}$ there exists a storage function such that the inequality \eqref{eqn.Passivity1} holds (with $\rho_{i} \geq 0$ for equilibrium independent passivity and $\rho_{i} > 0$ for output-strictly equilibrium independent passivity). 
\end{definition}

The relevance of equilibrium independent passivity for the analysis of dynamical networks can be readily seen. 
If the systems \eqref{sys.Node} and \eqref{sys.Controller1} are output-strictly equilibrium independent passive and equilibrium independent passive, respectively, one has to verify only that an equilibrium trajectory exists in the respective sets to make the basic convergence proof of Theorem \ref{thm.BasicConvergence} applicable. The exact equilibrium point need not be known.

One important implication of equilibrium independent passivity is that the equilibrium input-output map must be \emph{monotone}, and even co-coercive, if the system is output-strictly equilibrium independent passive, see \cite{Hines2011}.

\subsection{Maximal Equilibrium Independent Passivity}

While equilibrium independent passivity turns out to be an useful concept for network analysis, the given definition excludes some important systems.
Consider for example a simple integrator, i.e.,
$ \dot{x}_{i}(t) = u_{i}(t),\; y_{i}(t) = x_{i}(t) $. 
It is well known that the integrator is passive with respect to $\mc{U}_{i} = \{0\}$ and any output value $\yr _{i} \in \mathbb{R}$.\footnote{Passivity with respect to an arbitrary output $\yr_i \in \mathbb{R}$ can be readily see with the storage function $S_{i}(x_{i}(t)) = \frac{1}{2}(x_{i}(t) - \yr_{i})^{2}$.}
However, the equilibrium input-output map is not a (single-valued) \emph{function} such that the integrator is not equilibrium independent passive, as defined in \cite{Hines2011}. 

Motivated by this example, we propose here a refinement of equilibrium independent passivity. In particular, we do not require the equilibrium input-output maps $\kyi{i}$ to be \emph{functions}, but instead allow them to be \emph{relations} (or curves in $\mathbb{R}^{2}$). 
That is, $\kyi{i}$ is the set of all pairs $(\ur_{i},\yr_{i}) \in \mathbb{R}^{2}$ that are equilibrium input-output relations.  The domain of the relation is the set $\mc{U}_i$, i.e., $\mbox{dom}\,\kyi{i} := \mc{U}_i$. We will sometimes write $\kyi{i}(\ur_{i})$ to denote the set of all $\yr_{i}$ such that $(\ur_{i},\yr_{i}) \in \kyi{i}$. This gives an interpretation of $\kyi{i}(\ur_{i})$ as \emph{set-valued map}.
For the integrator example described above, the equilibrium input output relation is the vertical line through the origin, i.e., $\kyi{i} = \{ (\ur_{i},\yr_{i}) : \ur_{i} = 0, \yr_{i} \in \mathbb{R}\}$.
For relations in $\mathbb{R}^{2}$ we review the concept of \emph{maximal monotonicity}. 

\begin{definition}[\cite{Rockafellar1998}]\label{def.maxmonotone}
A relation $\kyi{i}$ is said to be \emph{maximally monotone} if it cannot be embedded into a larger monotone relation.  Equivalently, the relation $\kyi{i}$ is a maximal monotone relation if and only if
\begin{enumerate}[i)]
\item for arbitrary $(\ur_{i},\yr_{i}) \in \kyi{i}$ and $(\ur_{i}',\yr_{i}') \in \kyi{i}$ one has either $\ur_{i} \leq \ur_{i}'$ and $\yr_{i} \leq \yr_{i}'$, denoted by $(\ur_{i},\yr_{i}) \leq (\ur_{i}',\yr_{i}')$, or $(\ur_{i},\yr_{i}) \geq (\ur_{i}',\yr_{i}')$, and
\item  for arbitrary $(\ur_{i},\yr_{i}) \notin \kyi{i}$ there exists $(\ur_{i}',\yr_{i}') \in \kyi{i}$ such that neither $(\ur_{i},\yr_{i}) \leq (\ur_{i}',\yr_{i}')$ nor $(\ur_{i},\yr_{i}) \geq (\ur_{i}',\yr_{i}')$. 
\end{enumerate}
\end{definition}
We refer to \cite{Rockafellar1998} for a detailed treatment of maximal monotone relations. It is not difficult to see that the equilibrium input-output relation of the integrator system discussed above is maximally monotone.
Based on this definition, a refined version of equilibrium independent passivity can be introduced. 
Please note that only SISO systems are considered in this paper and therefore the following definition applies only to SISO systems.

\begin{definition}[{\small Maximal equilibrium independent passivity}]
A dynamical SISO system \eqref{sys.Node} is said to be maximal equilibrium independent passive if there exists a maximal monotone relation $\kyi{i} \subset \mathbb{R}^{2}$ such that for all $(\ur_{i},\yr_{i}) \in \kyi{i}$ there exits a positive semi-definite storage function $S_{i}(x_{i}(t))$ satisfying
\begin{align} \label{eqn.Passivity2}
 \dot{S}_{i} \leq  (y_{i}(t) - \yr_{i})(u_{i}(t) - \ur_{i}).
\end{align}

Furthermore, it is \emph{output-strictly maximal equilibrium independent passive} if additionally there is a constant $\rho_{i} > 0$ such that
\begin{align} \label{eqn.Passivity3}
 \dot{S}_{i} \leq - \rho_{i}\|y_{i}(t) - \yr_{i}\|^{2} + (y_{i}(t) - \yr_{i})(u_{i}(t) - \ur_{i}).
\end{align}
\end{definition}

The new notion of maximal equilibrium independent passivity is closely related to the definition of \cite{Hines2011}. 
In fact, any equilibrium independent system with $\mc{U}_{i} = \mathbb{R}$ is also maximal equilibrium independent passive. 
This includes in particular affine dynamical systems
\begin{align}
\begin{split} \label{sys.LTI}
\dot{x}(t) &= Ax(t) + Bu(t) + P\wr\\
y(t) &= Cx(t) + Du(t) + G\wr,
\end{split}
\end{align}
that were shown in \cite{Hines2011} to be output strictly equilibrium independent passive if they are output-strictly passive in the classical
sense for $\wr = 0$ and if $A$ is invertible.
The equilibrium input-output relation is then the (single-valued) affine function (and thus a maximal monotone relation)
$
k_{y}(\ur) = \left( -CA^{-1}B + D \right)\ur + \left(-CA^{-1}P + G\right)\wr.
$
Note that this is the dc-gain of the linear system plus the constant value determined by the exogenous inputs.

The two definitions also both include scalar nonlinear systems of the form
\begin{align} \label{sys.Scalar}
\dot{x}(t) = -f(x(t)) + u(t), \; y(t) = x(t), 
\end{align}
with $x(t) \in \mathbb{R}, u(t) \in \mathbb{R}, y(t)\in \mathbb{R}$, for which 
$
(x'(t)-x''(t))\bigl(f(x'(t)) - f(x''(t)) \bigr)   \geq  \gamma (x'(t)-x''(t))^{2}
$
for all $x', x'' \in \mathbb{R}$. 

However, the integrator is the central example of a system that is included in the new definition of maximal equilibrium independent passivity, but not in the original definition of \cite{Hines2011}.

In the following section, networks of the structure  \eqref{sys.Node}, \eqref{eqn.Interconnectionzeta},  \eqref{sys.Controller1}, \eqref{eqn.Interconnectionu} consisting of maximal equilibrium independent passive systems will be considered. It will be shown that these networks admit a certain \emph{inverse optimality}, in the sense that they converge to the solutions of several dual pairs of network optimization problems of the form \eqref{prob.OFP_Basic} and \eqref{prob.OPP_Basic}.
This result establishes a connection between the new definition of passivity and convex network optimization theory.

\section{Output Agreement Analysis} \label{sec.OutputAgreement}

We now investigate the steady-state behavior of the dynamical network  \eqref{sys.Node}, \eqref{eqn.Interconnectionzeta}, \eqref{sys.Controller1}, \eqref{eqn.Interconnectionu} and characterize an associated \emph{inverse optimality} for these systems. 
To prepare the following discussion, we introduce some additional notation. We will write $\kyf(\ub)$ for the stacked input-output relations, that is $\yb \in \kyf(\ub)$ means $\yr_{i} \in \kyi{i}(\ur_{i})$ for all $i \in \Nodeset$. Similarly we will write $\mc{U} = \mc{U}_{1} \times \cdots \times \mc{U}_{|\Nodeset|}$ and $\mc{Y} = \mc{Y}_{1} \times \cdots \times \mc{Y}_{|\Nodeset|}$ to indicate the domain and range of  $\kyf(\ub)$.

\subsection{The Plant Level}
The first observation we make is that a steady-state of the network  \eqref{sys.Node}, \eqref{eqn.Interconnectionzeta},  \eqref{sys.Controller1}, \eqref{eqn.Interconnectionu} requires all systems to be in \emph{output agreement}.
Suppose that $\xb$ and $\zb$ are steady-state solutions of the network \eqref{sys.Node}, \eqref{eqn.Interconnectionzeta},  \eqref{sys.Controller1}, \eqref{eqn.Interconnectionu}, and let $\yb$ be the corresponding steady-state output, then 
\[ \yb  = \beta \1, \]
for some $\beta \in \mathbb{R}$, called the \emph{agreement value}.
Output agreement follows from the steady-state condition $\dot{\zd} = 0$, that requires $\yb \in \mc{N}(E^{\top})$. As $\mc{G}$ is connected, $\yb \in \mc{N}(E^{\top})$ is equivalent to $\yb  = \beta \1$ for some $\beta$. 

The existence of an output agreement solution depends on properties of the nodes \eqref{sys.Node} and the network topology. In particular, the existence of an output agreement solution is related to the network equilibrium feasibility problem:
\begin{align}
\begin{split} \label{prob.EquilibriumProblem}
\mbox{Find\;} &\ub \in \mc{R}(E), \;  \yb \in\mc{N}(E^{\top})\\
\mbox{such that }&\yb \in \kyf(\ub).
\end{split}
\end{align}
A necessary condition for the existence of an output agreement solution is now the following.
\begin{lemma}[{\small Necessary Condition}]
 If the network \eqref{sys.Node}, \eqref{eqn.Interconnectionzeta},  \eqref{sys.Controller1}, \eqref{eqn.Interconnectionu} has a steady-state solution $\ub$, $\yb$, then this steady-state is a solution to \eqref{prob.EquilibriumProblem}.
\end{lemma}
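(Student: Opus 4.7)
The plan is to show that any steady-state of the interconnected network automatically satisfies each of the three conditions appearing in \eqref{prob.EquilibriumProblem}, by reading off the consequences of the steady-state condition $\dot{\xd}=0$, $\dot{\zd}=0$ on each layer of the block diagram in Figure~\ref{fig.BlockDiagram}.

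First, I would examine the controller dynamics \eqref{sys.Controller1}. At steady-state, $\dot{\zd}=0$, and since $\dot{\z}_k = \zeta_k$, this forces $\zetab = 0$. Combining with the interconnection relation \eqref{eqn.Interconnectionzeta}, namely $\zetab = E^{\top}\yb$, we obtain $E^{\top}\yb = 0$, i.e., $\yb \in \mc{N}(E^{\top})$. This establishes the second condition in \eqref{prob.EquilibriumProblem} (and, because $\mc{G}$ is connected, is exactly the output agreement condition $\yb = \beta\1$ already highlighted in the text).

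Next, I would turn to the input interconnection \eqref{eqn.Interconnectionu}, $\ud(t) = -E\mud(t)$. Evaluated at steady-state this becomes $\ub = -E\mub$, and since $\mc{R}(E) = \mc{R}(-E)$, we immediately get $\ub \in \mc{R}(E)$, which is the first condition in \eqref{prob.EquilibriumProblem}.

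Finally, for the third condition, I would use the fact that each node system \eqref{sys.Node} is at equilibrium under the constant input $\ur_i$ and produces the constant output $\yr_i$. By the definition of the equilibrium input-output relation $\kyi{i}$ introduced in Section~3.4, the pair $(\ur_i,\yr_i)$ belongs to $\kyi{i}$ for every $i \in \Nodeset$; stacking these relations yields $\yb \in \kyf(\ub)$, which is the third condition. I do not anticipate any real obstacle in this lemma; the only point that deserves a brief mention is that the admissibility of the steady-state pair $(\ur_i,\yr_i)$ as an element of $\kyi{i}$ is precisely how $\kyi{i}$ was defined as the set of all equilibrium input-output pairs of $\Sigma_i$, so no extra regularity (such as maximality of the monotone relation) is needed here.
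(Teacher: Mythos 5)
Your proposal is correct and follows essentially the same route as the paper's own proof: the controller steady-state condition $\dot{\zd}=0$ together with \eqref{eqn.Interconnectionzeta} gives $\yb\in\mc{N}(E^{\top})$, the interconnection \eqref{eqn.Interconnectionu} gives $\ub\in\mc{R}(E)$, and the node equilibrium condition gives $\yb\in\kyf(\ub)$. The paper states these three observations more tersely, but the content is identical, so there is nothing to add.
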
 

\begin{proof}
The  steady state condition for the plant and for the controller require  $\yb \in  \kyf(\ub)$ and $\yb \in \mc{N}(E^{\top})$, respectively. Additionally, the interconnection \eqref{eqn.Interconnectionu} implies that  $\ud(t) \in \mc{R}(E)$, and consequently that $\ub  \in \mc{R}(E)$.
\end{proof}
%
To obtain further insights into the properties of an output agreement solution, we will next establish a connection to network optimization problems and show that certain duality relations hold. 
Therefore, some results relating maximal monotone relations and convex functions are recalled from \cite{Rockafellar1998}. 
A first observation is that one can extend any maximal monotone relation $\kyi{i} \subset \mathbb{R}^{2}$ with domain $\mc{U}_{i}$ to a maximal monotone relation on $\mathbb{R}$ by setting it to $-\infty$ for all $\ur_{i}$ `left' of $\mc{U}_{i}$ and $+\infty$ for all $\ur_{i}$ `right' of $\mc{U}_{i}$.\footnote{ Note that since $\kyi{i}$ is a maximal monotone relation, $\mc{U}_i$ is a connected interval on $\mathbb{R}$} 
Now, we recall the following result of \cite[Thm. 24.9]{Rockafellar1997} that holds for $\mathbb{R}$: 
\begin{theorem}[\cite{Rockafellar1997}]
{The subdifferential for the closed proper convex functions on $\mathbb{R}$ are the maximal monotone relations from $\mathbb{R}$ to $\mathbb{R}$. }
\end{theorem}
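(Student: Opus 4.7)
The plan is to prove both directions of the characterization: first that every subdifferential of a closed proper convex function on $\mathbb{R}$ is a maximal monotone relation, and conversely that every maximal monotone relation on $\mathbb{R}$ arises this way.

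\textbf{Direction 1 (subdifferential $\Rightarrow$ maximal monotone).} Monotonicity is elementary: if $g\in\partial\Phi(\eta)$ and $g'\in\partial\Phi(\eta')$, then by definition
\[
\Phi(\eta') \geq \Phi(\eta) + g(\eta'-\eta), \qquad \Phi(\eta) \geq \Phi(\eta') + g'(\eta-\eta'),
\]
and adding these gives $(g-g')(\eta-\eta')\geq 0$. Thus $\partial\Phi$ satisfies the ordering condition (i) of Definition \ref{def.maxmonotone}. For maximality (condition (ii)), I would use a proximal-point argument: given a candidate pair $(\eta_0,g_0)\notin \partial\Phi$, consider the strongly convex auxiliary function $\eta\mapsto \Phi(\eta) + \tfrac{1}{2}(\eta-\eta_0-g_0)^2$. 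Because $\Phi$ is closed and proper, this has a unique minimizer $\eta^\star$, and the optimality condition yields $(\eta_0+g_0)-\eta^\star \in \partial\Phi(\eta^\star)$. Setting $g^\star := (\eta_0+g_0)-\eta^\star$, I would show that the pair $(\eta^\star,g^\star)\in\partial\Phi$ cannot be comparable with $(\eta_0,g_0)$ in the order of Definition \ref{def.maxmonotone}, contradicting the assumption and proving maximality.

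\textbf{Direction 2 (maximal monotone $\Rightarrow$ subdifferential).} Given a maximal monotone relation $k_y\subset\mathbb{R}^2$, I would exploit the one-dimensional structure: such a relation is essentially the graph of a nondecreasing (possibly multivalued) map on a connected interval $\mc{U}\subset\mathbb{R}$, with only countably many ``jumps''. Pick any measurable single-valued selection $g(\eta)\in k_y(\eta)$ (e.g., the lower branch) and fix $\eta_0\in\mc{U}$. Define
\[
\Phi(\eta) := \int_{\eta_0}^{\eta} g(s)\,ds \quad \text{for } \eta\in\mc{U},
\]
extended to $+\infty$ outside $\overline{\mc{U}}$ and by lower-semicontinuous closure at the endpoints. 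Since $g$ is nondecreasing, $\Phi$ is convex; it is proper by construction and closed by the lower-semicontinuous extension. The main work is then to verify that $\partial\Phi = k_y$: at any interior $\eta$ one checks via the integral formula that the set of slopes of supporting lines at $\eta$ is precisely the interval $[g(\eta^-),g(\eta^+)]$, which by maximality of $k_y$ coincides with $k_y(\eta)$. At boundary points of $\mc{U}$, the indicator contributions to $\Phi$ contribute half-lines to the subdifferential, again matching the extended relation.

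\textbf{Main obstacle.} I expect the first direction to be the trickier one conceptually, because maximality is a global statement about the graph and does not follow from any local subgradient inequality; the proximal construction hinges on the existence of a minimizer, which is where closedness and properness of $\Phi$ are used in an essential way. In direction 2, the only delicate point is ensuring that the subdifferential recovers the full multivalued part of $k_y$ at jump points and aligns with its behavior at the endpoints of $\mc{U}$; this is where maximality of $k_y$ is needed to rule out ``missing'' branches.
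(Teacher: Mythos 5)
The paper does not actually prove this statement: it is imported verbatim as Theorem 24.9 of Rockafellar's \emph{Convex Analysis} \cite{Rockafellar1997}, so there is no in-paper argument to compare against. Judged on its own, your sketch is a correct outline of the standard textbook proof. Direction 1 combines the elementary two-inequality monotonicity argument with a Minty-type maximality argument: your proximal construction shows that $I+\partial\Phi$ is surjective, and the observation that $\eta^{\star}+g^{\star}=\eta_{0}+g_{0}$ forces any comparable pair in the order of Definition \ref{def.maxmonotone} to be equal, which is exactly the right contradiction; closedness and properness enter, as you say, through existence of the minimizer (plus the Moreau--Rockafellar sum rule to split the subdifferential of $\Phi+\tfrac12(\cdot-\eta_{0}-g_{0})^{2}$). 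Direction 2 is Rockafellar's own route (his Theorem 24.2): integrate a nondecreasing selection, get convexity from monotonicity of the integrand, and use maximality to identify $\partial\Phi(\eta)$ with the full vertical segment $[g(\eta^{-}),g(\eta^{+})]$ at jump points. The two genuinely delicate points you flag are the right ones: (i) maximality in Direction 1 is global and cannot come from the pointwise subgradient inequality alone, and (ii) in Direction 2 the behavior at the endpoints of the interval $\mc{U}$ (finite limit of the integral versus divergence, and whether the endpoint belongs to the domain of $\partial\Phi$) must be matched to the extended relation via the lower-semicontinuous closure. Filling in (ii) carefully is the only place where your outline still requires real bookkeeping, but the architecture is sound and is, in substance, the proof the cited source gives.
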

Thus, one can associate to any maximal monotone relation, and consequently to any maximal equilibrium independent passive system, a closed proper convex function $K_{i}: \mathbb{R} \rightarrow \mathbb{R}$ that is unique up to an additive constant,
 such that
\begin{align}\label{eqn.IOMap}
 \partial K_{i}(\ur_{i}) = \kyi{i}(\ur_{i}) \quad \forall \ur_{i} \in \mc{U}_{i}.
\end{align} 
 If $\mc{U}_{i}$ is not the complete $\mathbb{R}$ and the maximal monotone relation has been extended as described above, then $K_{i}(\ur_{i}) = +\infty$ for all $\ur_{i} \notin \mc{U}_{i}$.
If the equilibrium input-output relation is a continuous single-valued function from $\mathbb{R}$ to $\mathbb{R}$ then $K_{i}(\ur_{i})$ is differentiable and 
$
 \nabla K_{i}(\ur_{i}) = \kyi{i}(\ur_{i}).
$ 

We will call $K_{i}(\ur_{i})$ the \emph{cost function} of the maximal equilibrium independent passive system $i$. 
Its convex conjugate, defined as in \eqref{eqn.Conjugate}, i.e.,
$K^{\star}_{i}(\yr_{i}) = \sup_{\ur_{i}} \; \{ \yr_{i}\ur_{i} - K_{i}(\ur_{i})\},$
is called the \emph{potential function} of  system $i$.  

The steady-states of the dynamical network of maximal equilibrium independent passive systems are intimately related to the following pair of dual network optimization problems.


 \textbf{Optimal Flow Problem:} Consider the the following \emph{optimal flow problem}
\begin{align}\tag{OFP1}
\begin{split}  \label{prob.OFP_Unconstrained}
\min_{\ub, \mub} \quad &\sum_{i=1}^{|\Nodeset|}K_{i}(\ur_{i}) \\
\mbox{s.t.}\quad &\ub + E\mub = 0.
\end{split}
\end{align}
This problem is of the form of an optimal flow problem \eqref{prob.OFP_Basic}.
The cost on the divergences  (in/out-flows) $\ub \in \mathbb{R}^{|\Nodeset|}$ are the integral functions of the equilibrium input-output relations, i.e., $C_{i}^{div} = K_{i}$, and the flows  $\mub \in \mathbb{R}^{|\Edgeset|}$ on the edges are not penalized, i.e., $C_{k}^{flux} = 0$. 

 \textbf{Optimal Potential Problem:} Dual to the optimal flow problem, we define the following \emph{optimal potential problem} 
\begin{align} \tag{OPP1} \label{prob.PotentialUnconstrained2}
\begin{split}
\min_{\yr_{i}} \; &\sum_{i=1}^{|\Nodeset|} K_{i}^{\star}(\yr_{i}), \\
 \mbox{s.t.\quad}&  E^{\top}\yb = 0.
 \end{split}
\end{align} 
This problem is in the form \eqref{prob.OPP_Basic}. The convex conjugates of the integral functions of the equilibrium input-to-output maps are the costs for the potential variables $\yb \in \mathbb{R}^{|\Nodeset|}$ of the nodes, i.e., $C_{i}^{pot} =  K_{i}^{\star}$.
The constraint $E^{\top}\yb=0$ enforces a balancing of the potentials over the complete network. The problem can be written in the standard form  \eqref{prob.OPP_Basic}, by choosing $C^{ten}_{k} = I_{0}$, i.e., the indicator function for the point zero.  
To simplify the presentation, we will use the short-hand notation $\mathbf{K}(\ub) := \sum_{i=1}^{|\Nodeset|}K_{i}(\ur_{i})$ and $\mathbf{K}^{\star}(\yb) := \sum_{i=1}^{|\Nodeset|} K_{i}^{\star}(\yr_{i})$.

The main result of this paper is that the the output agreement steady-states in a network of maximal equilibrium independent passive systems admit an inverse optimality.
\begin{theorem}[{\small Inverse Optimality of Output Agreement}]\label{thm.Necessary}
Suppose all node dynamics \eqref{sys.Node} are maximal equilibrium independent passive. If the network \eqref{sys.Node}, \eqref{eqn.Interconnectionzeta},  \eqref{sys.Controller1}, \eqref{eqn.Interconnectionu} has a steady-state solution $\ub$, $\yb$, then 
 (i) $\ub$ is an optimal solution to \eqref{prob.OFP_Unconstrained},
 (ii) $\yb$ is an optimal solution to \eqref{prob.PotentialUnconstrained2}, and
(iii) in the steady-state \eqref{prob.OFP_Unconstrained} and \eqref{prob.PotentialUnconstrained2} have same value with negative sign, i.e., 
$\sum_{i=1}^{|\Nodeset|} K_{i}(\ur_{i}) +  \sum_{i=1}^{|\Nodeset|} K_{i}^{\star}(\yr_{i}) = 0$.
 \end{theorem}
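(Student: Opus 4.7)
The plan is to exploit the Fenchel--Young (in)equality together with the orthogonality $\mc{N}(E^{\top})\perp\mc{R}(E)$ that is already recorded in Section \ref{sec.NetworkTheory}. The key translation is the identification \eqref{eqn.IOMap}: since the steady-state condition at each node forces $\yr_{i}\in\kyi{i}(\ur_{i})=\partial K_{i}(\ur_{i})$, the Fenchel equality delivers
\[
K_{i}(\ur_{i})+K_{i}^{\star}(\yr_{i})=\ur_{i}\yr_{i}\qquad\text{for every }i\in\Nodeset.
\]
Summing over $i$ gives $\mathbf{K}(\ub)+\mathbf{K}^{\star}(\yb)=\yb^{\top}\ub$, and this is the engine that drives all three conclusions.

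First I would verify primal and dual feasibility. From the steady-state condition on the controllers \eqref{sys.Controller1}, $\zetab=E^{\top}\yb=0$, so $\yb$ lies in the agreement space $\mc{N}(E^{\top})$ and hence is feasible for \eqref{prob.PotentialUnconstrained2}. From the interconnection \eqref{eqn.Interconnectionu}, $\ub=-E\mub$, so $\ub+E\mub=0$ and $\ub$ is feasible for \eqref{prob.OFP_Unconstrained}. The orthogonality relation then yields
\[
\yb^{\top}\ub=-\yb^{\top}E\mub=-(E^{\top}\yb)^{\top}\mub=0,
\]
which combined with the summed Fenchel equality above establishes (iii): $\mathbf{K}(\ub)+\mathbf{K}^{\star}(\yb)=0$.

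Next I would deduce (i) and (ii) from weak duality in its Fenchel form. For any $\ub'$ feasible for \eqref{prob.OFP_Unconstrained} (so $\ub'\in\mc{R}(E)$ by elimination of $\mub'$), the same orthogonality gives $\yb^{\top}\ub'=0$ since $\yb\in\mc{N}(E^{\top})$. Applying Fenchel's inequality $\mathbf{K}(\ub')+\mathbf{K}^{\star}(\yb)\geq\yb^{\top}\ub'=0$ and using (iii) yields
\[
\mathbf{K}(\ub')\geq -\mathbf{K}^{\star}(\yb)=\mathbf{K}(\ub),
\]
proving optimality of $\ub$ for \eqref{prob.OFP_Unconstrained}. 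A symmetric argument: for any $\yb'$ feasible for \eqref{prob.PotentialUnconstrained2}, $E^{\top}\yb'=0$ implies $\yb'^{\top}\ub=-\yb'^{\top}E\mub=0$, and Fenchel's inequality combined with (iii) gives $\mathbf{K}^{\star}(\yb')\geq-\mathbf{K}(\ub)=\mathbf{K}^{\star}(\yb)$, establishing (ii).

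The only delicate step, and the one I expect to warrant the most care, is invoking \eqref{eqn.IOMap} correctly when the equilibrium relations $\kyi{i}$ are genuinely set-valued (e.g.\ the integrator): one needs that $\yr_{i}\in\kyi{i}(\ur_{i})=\partial K_{i}(\ur_{i})$ is equivalent to the Fenchel equality $K_{i}(\ur_{i})+K_{i}^{\star}(\yr_{i})=\ur_{i}\yr_{i}$, which is exactly the content of the subgradient inversion formula for closed proper convex functions on $\mathbb{R}$ (\cite{Rockafellar1997}, Thm.~23.5). Everything else is routine manipulation of the primal/dual pair \eqref{prob.OFP_Unconstrained}--\eqref{prob.PotentialUnconstrained2} using the orthogonality of the circulation and agreement spaces.
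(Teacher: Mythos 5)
Your proof is correct, and it takes a genuinely different route from the paper's. The paper works through Lagrangian duality: it forms the Lagrangian of \eqref{prob.OFP_Unconstrained}, reads off the first-order condition $\tilde{\yb}\in\partial\mathbf{K}(\ub)=\kyf(\ub)$, computes the dual function $s(\tilde{\yb})=-\mathbf{K}^{\star}(\tilde{\yb})$ on $\mc{N}(E^{\top})$ (and $-\infty$ off it) to identify the dual with \eqref{prob.PotentialUnconstrained2}, and then obtains (iii) from the saddle-point equality. You instead bypass the Lagrangian entirely: the steady-state condition $\yr_{i}\in\kyi{i}(\ur_{i})=\partial K_{i}(\ur_{i})$ gives the Fenchel equality $K_{i}(\ur_{i})+K_{i}^{\star}(\yr_{i})=\ur_{i}\yr_{i}$ (Rockafellar, Thm.~23.5, which is also the inversion formula the paper itself invokes later in Section \ref{sec.NetworkAnalysis}), the orthogonality $\mc{N}(E^{\top})\perp\mc{R}(E)$ kills the cross term to give (iii), and then (i) and (ii) follow by directly verifying optimality against arbitrary feasible competitors via the Fenchel--Young inequality $\Phi(\eta)+\Phi^{\star}(\xi)\geq\eta^{\top}\xi$ already recorded in the Preliminaries. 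What your approach buys is a shorter, self-contained verification that never needs to argue about boundedness of the dual function or the validity of the sup-inf exchange \eqref{eqn.SaddleEquality}; it also handles the set-valued case (e.g.\ the integrator) cleanly, since the subgradient characterization of Fenchel equality is exactly the right tool there. What the paper's derivation buys is the explicit identification of \eqref{prob.PotentialUnconstrained2} as the Lagrange dual of \eqref{prob.OFP_Unconstrained}, which is reused as a template for the controller-level and generalized results later on. You correctly flag the one delicate step (the equivalence between $\yr_{i}\in\partial K_{i}(\ur_{i})$ and Fenchel equality), and your handling of it is sound.
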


\begin{proof}
It is sufficient to show that the conclusions hold if the equilibrium problem \eqref{prob.EquilibriumProblem} has a solution.
If  there is a solution $\ub$, $\yb$ to \eqref{prob.EquilibriumProblem}, then $\ub \in \mathcal{R}(E) \cap \mc{U}$, while $\yb \in \mathcal{N}(E^{\top}) \cup \mc{Y}$. Thus, both optimization problem have a feasible solution and are finite.
Consider now the Lagrangian function of \eqref{prob.OFP_Unconstrained} with multiplier $\tilde{\yb}$, i.e.,
\[ \mc{L}(\ub,\mub,\tilde{\yb}) = \sum_{i=1}^{|\Nodeset|}K_{i}(\ub) - \tilde{\yb}^{\top}\ub + \tilde{\yb}^{\top}E\mub. \]
For $\ub$ to be the a solution to \eqref{prob.OFP_Unconstrained}, it is necessary and sufficient that 
\begin{align}
\tilde{\yb} \in \partial \boldsymbol{K}(\ub) 
\end{align}
for the optimal multiplier $\tilde{\yb}$. Thus, since  $\partial \boldsymbol{K}(\ub) = \kyf(\ub)$, the multiplier satisfies $\tilde{\yb} \in  \kyf(\ub)$. 

To conclude that $\ub$ is an optimal solution, it remains to show that the equilibrium trajectory $\yb$ is an optimal multiplier, i.e., $\yb = \tilde{\yb}$. As $\yb$ satisfies the equilibrium condition, it only remains to show that $\tilde{\yb} = \mc{N}(E^{\top})$.
Let $s(\tilde{\yb}) = \inf_{\ub,\mub} \mc{L}(\ub,\mub,\tilde{\yb})$. Now, if $\tilde{\yr} \notin \mc{N}(E^{\top})$ then $s(\tilde{\yb})$ is unbounded below.
For $\tilde{\yb} \in \mc{N}(E^{\top})$ it follows that $s(\tilde{\yr}) = - \mathbf{K}^{\star}(\tilde{\yb})$. Thus, the supremum problem is identical to \eqref{prob.PotentialUnconstrained2} with the negative objective function and both problems will have the same solution. 
Now, if the network equilibrium problem has a solution, than there must exists $\ub$ and $\yb$ satisfying the optimality conditions for the dual pair of optimization problems \eqref{prob.OFP_Unconstrained} and \eqref{prob.PotentialUnconstrained2}. 
Finally, as the steady-state solution is an optimal to both problems \eqref{prob.OFP_Unconstrained} and \eqref{prob.PotentialUnconstrained2}, it must be a saddle-point for the Lagrangian function, i.e., it must hold that
\begin{align}\label{eqn.SaddleEquality}
\sup_{\yb} \inf_{\ub,\mub} \;    \mc{L}(\ub,\mub,\yb) =  \inf_{\ub,\mub} \sup_{\yb} \;    \mc{L}(\ub,\mub,\yb). 
\end{align}
Let now $r(\ub,\mub) = \sup_{\yb} \;    \mc{L}(\ub,\mub,\yb)$. It follows that $r(\ub,\mub) = \mathbf{K}(\ub)$ if $\ub + E\mub = 0$ and $r(\ub,\mub) = + \infty$ otherwise.
Additionally, we have already seen that $s(\yr) = \inf_{\ub,\mub} \;    \mc{L}(\ub,\mub,\yb)$ is $s(\yr) = - \mathbf{K}^{\star}(\yb)$ if $\yb \in \mc{N}(E^{\top})$ and $s(\yr) = - \infty$ otherwise.
For \eqref{eqn.SaddleEquality} to hold, the optimal solution $\ub \in \mc{R}(E)$ and $\yb \in \mc{N}(E^{\top})$ must be such that  $\boldsymbol{K}(\ub) + \boldsymbol{K}^{\star}(\yb) = 0.$ As shown before, the steady-states of the dynamic network are optimal solutions to \eqref{prob.OFP_Unconstrained} and \eqref{prob.PotentialUnconstrained2} and must therefore satisfy the previous equality.
\end{proof}

The connection between the necessary condition for the existence of an agreement steady-state of the dynamical network and the dual pair of network optimization problems opens the way to use well-known tools form convex analysis for investigating the properties of output agreement steady-states in dynamic networks.

\begin{corollary}[{\small Existence}]
Suppose all node dynamics are maximal equilibrium independent passive with $\mc{U}_{i} = \mathbb{R}$ and $\mc{Y}_{i} = \mathbb{R}$, then an output agreement steady-state exists.
\end{corollary}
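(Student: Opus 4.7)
The plan is to run Theorem~\ref{thm.Necessary} in reverse: exhibit an optimal solution of the dual pair \eqref{prob.OFP_Unconstrained}--\eqref{prob.PotentialUnconstrained2}, then read off from it a solution of the equilibrium problem \eqref{prob.EquilibriumProblem} and assemble an output-agreement steady-state of the network.

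I first translate the hypotheses into convex-analytic properties of the cost function $K_i$ defined by \eqref{eqn.IOMap}. Since $\mc{U}_i \subseteq \mathrm{dom}\,K_i \subseteq \mathbb{R}$, the assumption $\mc{U}_i=\mathbb{R}$ yields $\mathrm{dom}\,K_i = \mathbb{R}$, so $K_i$ is finite and continuous on $\mathbb{R}$; by standard Fenchel duality \cite{Rockafellar1997}, this is equivalent to $K_i^{\star}$ being \emph{super-coercive}, $K_i^{\star}(\beta)/|\beta|\to+\infty$ as $|\beta|\to\infty$. Dually, $\mathrm{range}\,\partial K_i = \mathrm{range}\,\kyi{i} = \mc{Y}_i = \mathbb{R}$ together with the inclusion $\mathrm{range}\,\partial K_i \subseteq \mathrm{dom}\,K_i^{\star}$ gives $\mathrm{dom}\,K_i^{\star} = \mathbb{R}$, and by the symmetric principle $K_i$ itself is super-coercive.

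Next I reduce the optimal potential problem \eqref{prob.PotentialUnconstrained2} to a scalar minimization. Since $\mc{G}$ is connected, $\mc{N}(E^\top) = \{\beta\1 : \beta \in \mathbb{R}\}$, and the problem becomes $\min_{\beta\in\mathbb{R}}\,\Phi(\beta)$ with $\Phi(\beta) := \sum_{i} K_i^{\star}(\beta)$. As a finite sum of finite-valued, closed convex, super-coercive functions on $\mathbb{R}$, $\Phi$ is itself finite-valued, convex and super-coercive, hence attains its infimum at some $\beta^\star$. Setting $\yb := \beta^\star\1 \in \mc{N}(E^\top)$, the Fermat rule $0 \in \partial\Phi(\beta^\star) = \sum_i \partial K_i^{\star}(\beta^\star)$ yields scalars $\ur_i \in \partial K_i^{\star}(\beta^\star)$ with $\sum_i \ur_i = 0$, i.e.\ $\ub \in \mc{R}(E)$; Fenchel reciprocity $\ur_i \in \partial K_i^{\star}(\yr_i) \Leftrightarrow \yr_i \in \partial K_i(\ur_i) = \kyi{i}(\ur_i)$ then gives $\yb \in \kyf(\ub)$, so $(\ub,\yb)$ solves \eqref{prob.EquilibriumProblem}. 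A node equilibrium $x_i$ for each pair $(\ur_i,\yr_i)\in\kyi{i}$ is furnished by maximal equilibrium independent passivity, and any $\mub$ with $-E\mub=\ub$ together with compatible preimages $\z_k$ under $\psi_k$ completes the steady-state. The key non-routine step is the convex-analytic passage from the topological hypothesis $\mc{U}_i = \mc{Y}_i = \mathbb{R}$ to super-coercivity of both $K_i$ and $K_i^{\star}$; once this is in hand, attainment of the reduced scalar minimum and the remaining KKT bookkeeping are standard.
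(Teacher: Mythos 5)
Your argument is correct and follows the same route as the paper's proof---pass through the dual pair \eqref{prob.OFP_Unconstrained}/\eqref{prob.PotentialUnconstrained2} and read off a solution of the equilibrium problem \eqref{prob.EquilibriumProblem}---but it is substantially more careful where the paper is terse. The paper's proof asserts that ``both optimization problems have a feasible solution and strong duality holds'' and then speaks of ``the optimal primal-dual solution pair''; feasibility alone does not give attainment of the infimum, and your super-coercivity argument supplies exactly that missing step: $\mc{U}_i=\mathbb{R}$ forces $\mathrm{dom}\,K_i=\mathbb{R}$, hence $K_i^{\star}$ is super-coercive, hence the reduced scalar objective $\Phi(\beta)=\sum_i K_i^{\star}(\beta)$ on the agreement subspace is coercive and attains its minimum, while $\mc{Y}_i=\mathbb{R}$ makes $\Phi$ finite everywhere so the subdifferential sum rule and the Fermat condition apply; Fenchel reciprocity then yields $(\ub,\yb)$ solving \eqref{prob.EquilibriumProblem}, which is what the corollary (read together with the paper's own proof, which speaks only of a ``possible output agreement steady state'') actually asserts. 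The one loose end is your closing sentence: the existence of controller preimages $\zb\in\mc{R}(E^{\top})$ with $\bfpsi(\zb)=\mub$ is \emph{not} implied by the corollary's hypotheses---that is precisely problem \eqref{prob.Equilibrium2}, which the paper handles later under the additional assumption that the $\psi_k$ are strongly monotone---so that sentence should either be dropped or flagged as requiring the controller-realization hypotheses; with that caveat the proof is complete and in fact tighter than the original.
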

\begin{proof}
Under the given assumption both optimization problems have a feasible solution and strong duality holds. The optimal primal-dual solution pair solves the equilibrium problem \eqref{prob.EquilibriumProblem} and thus corresponds to a possible output agreement steady state.
\end{proof}

\begin{corollary}[{\small Uniqueness}]\label{cor.Uniqueness}
If the dynamical systems \eqref{sys.Node} are maximal equilibrium independent passive with a nonempty $\mc{U}_{i}$ and a strongly monotone equilibrium input-output function $\kyi{i}$ satisfying $\lim_{\ell \rightarrow \infty} |\kyi{i}(\ur^{\ell})| \rightarrow \infty$ whenever $\ur^{1}, \ur^{2}, ...$ is a sequence in $\mc{U}_{i}$ converging to a boundary point of $\mc{U}_{i}$, then there exists at most one pair $(\ub, \yb)$ that can be a steady-state solution. 
\end{corollary}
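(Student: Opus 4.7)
The plan is to leverage Theorem~\ref{thm.Necessary} to reduce the uniqueness question to uniqueness of the minimizer of \eqref{prob.OFP_Unconstrained}, then to convert the strong monotonicity hypothesis on each $\kyi{i}$ into strong convexity of the flow-problem objective, at which point uniqueness becomes a standard consequence.

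The first step is to use the identification $\kyi{i} = \partial K_{i}$ from \eqref{eqn.IOMap} together with the classical equivalence in convex analysis between strong monotonicity of a subdifferential and strong convexity of the underlying closed proper convex function. If $\kyi{i}$ is $\alpha_{i}$-strongly monotone on $\mc{U}_{i}$, then $K_{i}$ is $\alpha_{i}$-strongly convex on $\mc{U}_{i}$; extending $K_{i}$ by $+\infty$ outside $\mc{U}_{i}$ preserves strong convexity on the effective domain. Summing over the nodes, the aggregate cost $\mathbf{K}(\ub) = \sum_{i=1}^{|\Nodeset|} K_{i}(\ur_{i})$ is $\alpha$-strongly convex with $\alpha = \min_{i} \alpha_{i} > 0$ on $\mc{U}$.

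The second step applies Theorem~\ref{thm.Necessary}: any steady-state $(\ub,\yb)$ must have $\ub$ optimal for \eqref{prob.OFP_Unconstrained}, which amounts to minimizing the strongly convex extended function $\mathbf{K}$ over the subspace $\mc{R}(E)$. The intersection of this subspace with the convex effective domain $\mc{U}$ is convex, and a strongly convex function admits at most one minimizer on a convex set; consequently $\ub$ is uniquely determined. Finally, because the hypothesis requires each $\kyi{i}$ to be a strongly monotone \emph{function} (hence single-valued), the stacked map $\kyf$ is single-valued, and the steady-state identity $\yb \in \kyf(\ub)$ pins down $\yb$ uniquely from $\ub$.

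The main obstacle, and the reason the boundary condition $\lim_{\ell \to \infty} |\kyi{i}(\ur^{\ell})| \to \infty$ appears in the hypothesis, is ruling out a pathology in which the optimizer of \eqref{prob.OFP_Unconstrained} lies on the boundary of $\mc{U}$. On such a boundary, $\partial K_{i}(\ur_{i})$ could strictly contain $\kyi{i}(\ur_{i})$ through a normal-cone contribution, breaking the clean identification $\tilde{\yb} \in \kyf(\ub)$ that the proof of Theorem~\ref{thm.Necessary} relies on. The divergence of $|\kyi{i}|$ at the boundary guarantees that no finite Lagrange multiplier $\yb$ can be paired with a boundary primal optimizer, so the unique optimizer must lie in $\mbox{int}\,\mc{U}$, where the subdifferential collapses to $\kyi{i}$ and the uniqueness argument closes without a boundary-case complication.
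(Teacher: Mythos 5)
Your proposal is correct and follows essentially the same route as the paper: reduce uniqueness to the optimizer of \eqref{prob.OFP_Unconstrained} via Theorem \ref{thm.Necessary}, turn strong monotonicity of $\kyi{i}=\partial K_{i}$ into strong (hence strict) convexity of $\mathbf{K}$, and use the boundary blow-up of $|\kyi{i}|$ to exclude boundary pathologies. The paper compresses your last two paragraphs into the single observation that the $K_{i}$ are differentiable and \emph{essentially smooth} in Rockafellar's sense, which is exactly the property you unpack by hand.
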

\begin{proof}
From the assumptions follow that $K_{i}(\ur_{i})$ are differentiable and essentially smooth convex functions (see \cite[p. 251]{Rockafellar1997}). Thus, \eqref{prob.OFP_Unconstrained} can have at most one solution. 
If such a solution exists, then the dual problem also has a solution.  
\end{proof}
\begin{corollary}[{\small Agreement Value}]
Assume the same assumptions as for Corollary \ref{cor.Uniqueness} hold. If an output agreement steady-state exists, the agreement value $\beta$ satisfies
\begin{align} \label{eqn.OptimalityAgreement}
\sum_{i=1}^{|\Nodeset|}\kyi{i}^{-1}(\beta) = 0.
\end{align}
\end{corollary}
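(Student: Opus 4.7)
The plan is to unwind the definition of an output agreement steady state using what Theorem \ref{thm.Necessary} and the preceding Lemma established as necessary conditions, and then invert the equilibrium input--output maps component-wise.

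First, I would note that output agreement means $\yb=\beta\1$, so in particular $\yr_i=\beta$ for every $i\in\Nodeset$. Since the hypotheses of Corollary \ref{cor.Uniqueness} are assumed, each $\kyi{i}$ is a strongly monotone (single-valued) function, hence injective on its domain $\mc{U}_i$. Because an output agreement steady-state is assumed to exist, for every $i$ there is some $\ur_i\in\mc{U}_i$ with $\beta=\kyi{i}(\ur_i)$, and by injectivity this $\ur_i$ is uniquely determined, so $\ur_i=\kyi{i}^{-1}(\beta)$ is well defined.

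Next, I would invoke the necessary condition from the Lemma preceding Theorem \ref{thm.Necessary}: any steady-state input vector satisfies $\ub\in\mc{R}(E)$. Because $\1^{\top}E=0$, this forces $\1^{\top}\ub=0$, i.e.
\begin{align*}
\sum_{i=1}^{|\Nodeset|}\ur_i = 0.
\end{align*}
Substituting $\ur_i=\kyi{i}^{-1}(\beta)$ yields exactly \eqref{eqn.OptimalityAgreement}.

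The only subtle point is ensuring that $\kyi{i}^{-1}(\beta)$ is meaningful, but this is handled by the two ingredients above: existence of a steady-state supplies at least one preimage in $\mc{U}_i$, while strong monotonicity supplies uniqueness. No optimization machinery from Theorem \ref{thm.Necessary} is actually needed for this corollary; the proof is essentially a direct translation of the steady-state relations through the (now invertible) maps $\kyi{i}$, combined with the flow-conservation interpretation $\1^{\top}\ub=0$ of $\ub\in\mc{R}(E)$.
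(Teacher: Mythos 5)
Your proof is correct, but it takes a genuinely different route from the paper's. The paper proves the identity through the dual optimization problem \eqref{prob.PotentialUnconstrained2}: it substitutes $\yb=\beta\1$ into the optimal potential problem, invokes the conjugate-gradient inversion $\nabla K_{i}^{\star}(\yr_{i})=\kyi{i}^{-1}(\yr_{i})$ (Theorem 26.1 of Rockafellar), and reads off \eqref{eqn.OptimalityAgreement} as the first-order optimality condition of $\min_{\beta}\sum_{i}K_{i}^{\star}(\beta)$. You instead bypass the optimization machinery entirely: from the necessary-condition lemma you take $\ub\in\mc{R}(E)$, observe $\mc{R}(E)\perp\1$ so $\sum_{i}\ur_{i}=0$, and use strong monotonicity of $\kyi{i}$ to justify the substitution $\ur_{i}=\kyi{i}^{-1}(\beta)$. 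Both arguments are sound, and your handling of the well-definedness of $\kyi{i}^{-1}(\beta)$ (existence from the assumed steady-state, uniqueness from injectivity) is exactly the point that needs care. Your route is more elementary and makes transparent that \eqref{eqn.OptimalityAgreement} is nothing but Kirchhoff's current law pushed through the inverted equilibrium maps; the paper's route buys the additional interpretation that the agreement value $\beta$ is the minimizer of the aggregate potential function $\sum_{i}K_{i}^{\star}$, which is the insight the surrounding inverse-optimality narrative is built on, and it would generalize more gracefully to the set-valued case where $\kyi{i}^{-1}$ must be read as the subdifferential $\partial K_{i}^{\star}$ rather than a function.
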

\begin{proof}
It follows from Theorem 26.1 in \cite{Rockafellar1997} that $\nabla K_{i}^{\star}(\yr_{i}) = \kyi{i}^{-1}(\yr_{i})$.
Thus, after replacing $\yb$ in \eqref{prob.PotentialUnconstrained2} with $\yb = \beta \1$, the optimality condition of \eqref{prob.PotentialUnconstrained2} corresponds exactly to \eqref{eqn.OptimalityAgreement}. 
\end{proof}
\begin{remark}
The above results apply to networks of homogeneous or heterogeneous maximally equilibrium independent systems.  For homogeneous systems, or more generally, for systems where the intersection of the equilibrium input-output maps $\kyi{i}$ is a single point, the solution to \eqref{prob.PotentialUnconstrained2} is simply that intersection point.  This set-up is considered in various passivity-based cooperative control approaches such as \cite{Arcak2007}, \cite{Chopra2006}. 
Thus, it is precisely the \emph{heterogeneous} case, i.e., when the equilibrium input-output maps do not all intersect at the same point,\footnote{This means that there are at least two systems that have distinct equilibria if they are not coupled.} where the presented analysis methods give new insights and turn out to be a powerful network analysis tool. 

\end{remark}

\subsection{The Control Level}
It remains to investigate when the controller dynamics \eqref{sys.Controller1}  can realize an output agreement steady-state. In particular, in the steady-state configuration, the controller \eqref{sys.Controller1} must generate a signal $\mub$ that corresponds to the desired control input. Suppose a solution $\ub$ to \eqref{prob.EquilibriumProblem} is known, then the controller must be such that the following static network equilibrium feasibility problem has a solution:
\begin{align} \label{prob.Equilibrium2}
\begin{split}
\mbox{Find} \quad &\zb \in \mc{R}(E^{\top}) \\
\mbox{such that }&\ub = - E \bfpsi(\zb).
\end{split}
\end{align}

\begin{lemma}[{\small Necessary and Sufficient Condition}]
The network \eqref{sys.Node}, \eqref{eqn.Interconnectionzeta},  \eqref{sys.Controller1}, \eqref{eqn.Interconnectionu} has a steady-state solution if and only if there exists a solution to \eqref{prob.EquilibriumProblem} and \eqref{prob.Equilibrium2}.
\end{lemma}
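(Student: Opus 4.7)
The plan is to prove both directions by directly translating the steady-state conditions of each subsystem into the two equilibrium feasibility problems.

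For the forward direction, assume a steady-state trajectory exists with plant state $\xb$, controller state $\zb$, and associated constant signals $\ub$, $\yb$, $\mub$, $\zetab$. Setting $\dot{\zd} = 0$ in \eqref{sys.Controller1} forces $\zetab = E^{\top}\yb = 0$, hence $\yb \in \mc{N}(E^{\top})$. Setting $\dot{\xd} = 0$ and invoking maximal equilibrium independent passivity, which identifies $\kyf$ as the equilibrium input-output relation, gives $\yb \in \kyf(\ub)$. The coupling $\ub = -E\mub$ places $\ub \in \mc{R}(E)$, so $(\ub,\yb)$ solves \eqref{prob.EquilibriumProblem}. For \eqref{prob.Equilibrium2}, the steady-state controller output satisfies $\mub = \bfpsi(\zb)$ and therefore $\ub = -E\bfpsi(\zb)$. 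The fact that $\zb \in \mc{R}(E^{\top})$ follows from the structure of the controller dynamics: since $\dot{\zd}(t) = E^{\top}\yd(t) \in \mc{R}(E^{\top})$ at every instant and the controller is initialized in $\mc{R}(E^{\top})$ (as implicit in the problem setup), the subspace $\mc{R}(E^{\top})$ is invariant under the controller flow.

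For the converse, assume $(\ub,\yb)$ solves \eqref{prob.EquilibriumProblem} and $\zb$ solves \eqref{prob.Equilibrium2}. I would construct the steady-state point of the composite system explicitly. By maximal equilibrium independent passivity, every pair $(\ur_i,\yr_i) \in \kyi{i}$ corresponds to some equilibrium state $\xr_i$ of the $i$-th plant with $0 = f_i(\xr_i,\ur_i,\wr_i)$ and $\yr_i = h_i(\xr_i,\ur_i,\wr_i)$; stacking these gives $\xb$. Take the controller state equal to $\zb$. Then $\dot{\xd} = \fb(\xb,\ub,\wb) = 0$ by construction, $\yd = \yb$, the relative output vector is $\zetad = E^{\top}\yb = 0$ because $\yb \in \mc{N}(E^{\top})$, hence $\dot{\zd} = 0$, and finally $\ud = -E\bfpsi(\zb) = \ub$ by \eqref{prob.Equilibrium2}. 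Every time derivative vanishes, yielding a bona fide steady-state.

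The step I expect to be delicate is the forward direction's assertion $\zb \in \mc{R}(E^{\top})$, since solving $\ub = -E\bfpsi(\zb)$ may admit other solutions outside $\mc{R}(E^{\top})$ (the nonlinearity of $\bfpsi$ prevents a clean projection argument). The clean way to handle this is to observe that the integrator structure $\dot{\zd} = E^{\top}\yd$ renders $\mc{R}(E^{\top})$ an invariant subspace that contains the trajectory of $\zb$ throughout its evolution, so the steady-state reached by the dynamics necessarily lies in $\mc{R}(E^{\top})$. Once this is in place, the proof reduces to straightforward bookkeeping of the equilibrium conditions of each subsystem in the block diagram of Fig.~\ref{fig.BlockDiagram}.
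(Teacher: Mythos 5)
Your proof is correct and follows essentially the same route as the paper's: both directions are handled by direct bookkeeping of the steady-state conditions of the plant, the controller, and the interconnection, constructing the equilibrium explicitly in the converse direction. You are in fact slightly more careful than the paper, which silently takes $\zb \in \mc{R}(E^{\top})$ for granted; your invariance argument (the integrator structure $\dot{\zd} = E^{\top}\yd$ keeps the controller state in $\mc{R}(E^{\top})$ when initialized there) is the standard way to close that gap.
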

\begin{proof}
If the equilibrium problems have a solution $\ub, \yb, \zb$, then $\ub, \yb, \mub = \bfpsi(\zb)$ and $\zetab=0$
are a steady-state solution to \eqref{sys.Node}, \eqref{eqn.Interconnectionzeta},  \eqref{sys.Controller1}, \eqref{eqn.Interconnectionu}.
Any steady-state solution $\ub, \yb, \mub,\zetab$ of \eqref{sys.Node}, \eqref{eqn.Interconnectionzeta},  \eqref{sys.Controller1}, \eqref{eqn.Interconnectionu} solves the two equilibrium problems with $\mub = \bfpsi(\zb)$.  
\end{proof}
Please note that the two equilibrium problems \eqref{prob.EquilibriumProblem} and \eqref{prob.Equilibrium2} are not independent.
However, if \eqref{prob.EquilibriumProblem} has a unique solution, \eqref{prob.Equilibrium2} has no influence on the solution of \eqref{prob.EquilibriumProblem}.

As the required steady-state input $\ub$ is in general not known for the controller design, it seems appropriate to design the controller such that \eqref{prob.Equilibrium2} is feasible for any $\ub \in \mc{R}(E)$.
Again, it will turn out that the feasibility of the network equilibrium problem is intimately related to maximal monotonicity. 
In particular, we show that \eqref{prob.Equilibrium2} has a solution for all $\ub \in \mc{R}(E)$ if $\psi_{k}$ are \emph{strongly monotone functions}.

Following this observation, we now assume that all $\psi_{k}$ are strongly monotone functions. Then, one can associate to each edge $k \in \Edgeset$ a closed, proper strongly convex function $P_{k}: \mathbb{R} \rightarrow \mathbb{R}$ such that
\begin{align}
\nabla P_{k}(\zr_{k}) = \psi_{k}(\zr_{k}).
\end{align}
\begin{lemma}
Suppose the functions $\psi_{k}$ are strongly monotone, than the controller dynamics \eqref{sys.Controller1} is maximal equilibrium independent passive.
\end{lemma}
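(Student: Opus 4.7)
The plan is to exhibit a maximal monotone equilibrium input–output relation for the controller and construct a storage function from the convex potential $P_k$ that certifies inequality \eqref{eqn.Passivity2} for every equilibrium pair.

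First I would pin down the equilibrium input–output relation $\mathbf{k}_{y,k}$. Since $\Pi_k$ is an integrator driving the static map $\psi_k$, the steady–state condition $\dot{\zr}_k = 0$ forces $\zetar_k = 0$, while the steady–state output $\mur_k = \psi_k(\zr_k^\star)$ can take any value in the range of $\psi_k$. Strong monotonicity of $\psi_k$ with continuity gives $|\psi_k(\eta)-\psi_k(\xi)|\geq \alpha|\eta - \xi|$ and hence $\psi_k$ is a bijection of $\mathbb{R}$, so $\mathbf{k}_{y,k} = \{(0,\mur_k) : \mur_k \in \mathbb{R}\}$, a vertical line through the origin. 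This is trivially maximally monotone in the sense of Definition \ref{def.maxmonotone}: any two points in the relation are comparable in the product order, while any $(\zetar,\mur)\notin \mathbf{k}_{y,k}$ has $\zetar\neq 0$ and can be shown incomparable with some point of the form $(0,\mur')$ by choosing $\mur'$ on the appropriate side of $\mur$.

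Next I would build the storage function as the Bregman divergence associated with $P_k$. For any chosen equilibrium pair $(\zetar_k,\mur_k)=(0,\psi_k(\zr_k^\star))\in \mathbf{k}_{y,k}$, define
\[
W_k(\zr_k) := P_k(\zr_k) - P_k(\zr_k^\star) - \psi_k(\zr_k^\star)\,(\zr_k - \zr_k^\star).
\]
Because $P_k$ is strongly convex with $\nabla P_k = \psi_k$, the function $W_k$ is nonnegative and vanishes at $\zr_k = \zr_k^\star$, so it is an admissible storage function. Differentiating along trajectories of \eqref{sys.Controller1} and using $\dot{\zr}_k = \zeta_k$,
\[
\dot{W}_k = \bigl(\psi_k(\zr_k) - \psi_k(\zr_k^\star)\bigr)\dot{\zr}_k = (\mu_k(t) - \mur_k)\,\zeta_k(t) = (\mu_k(t) - \mur_k)(\zeta_k(t) - \zetar_k),
\]
since $\zetar_k = 0$. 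This is precisely the passivity inequality \eqref{eqn.Passivity2} for the equilibrium pair $(\zetar_k,\mur_k)$, and the choice of $\zr_k^\star$ (equivalently, of $\mur_k$) was arbitrary in $\mathbf{k}_{y,k}$.

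Combining the two steps yields maximal equilibrium independent passivity of $\Pi_k$. The only nontrivial point is the surjectivity of $\psi_k$ needed to identify $\mathbf{k}_{y,k}$ with the full vertical line $\{0\}\times\mathbb{R}$ (so that the relation is \emph{maximal}, not merely monotone); this is where strong monotonicity, as opposed to mere monotonicity, is essential, because without the linear growth lower bound on $\psi_k$ one could get a proper monotone subset of the vertical line that fails maximality.
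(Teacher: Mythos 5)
Your proof is correct and follows essentially the same route as the paper: identifying the equilibrium relation as the vertical line $\{(\zetar_k,\mur_k):\zetar_k=0\}$ and using the Bregman distance $W_k(\z_k)=P_k(\z_k)-P_k(\zr_k)-\nabla P_k(\zr_k)(\z_k-\zr_k)$ as the storage function. Your explicit verification that strong monotonicity makes $\psi_k$ surjective (so the relation is the \emph{full} vertical line and hence maximal) is a detail the paper leaves implicit, but it is the same argument.
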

\begin{proof}
The equilibrium input set for the controller dynamics is solely $\zeta_{k} = 0$. However, the dynamics \eqref{sys.Controller1} is passive with respect to the input $\zeta_{k} = 0$ and any output $\mur_{k} \in \mathbb{R}$. To see this, consider the storage function
\begin{align*}
W_{k}(\z_{k}(t),\zr_{k}) = P_{k}(\z_{k}(t)) - P_{k}(\zr_{k}) - \nabla P_{k}(\zr_{k})(\z_{k}(t) - \zr_{k}),
\end{align*}
where $\zr_{k}$ is such that $\mur_{k} = \nabla P_{k}(\zr_{k})$. From strict convexity of $P_{k}$ follows directly that $W_{k}$ is a positive definite function.\footnote{Note that $W_{k}$ is the Bregman distance associated to $P_{k}$ between $\z_{k}(t)$ and $\zr_{k}$, see \cite{Bregman1967}.} Now, maximal passivity follows immediately from
\[ \dot{W}_{k} = (\nabla P_{k}(\z_{k}(t)) - \nabla P_{k}(\zr_{k}))\zeta_{k}(t) = (\mu_{k}(t) - \mur_{k})(\zeta_{k}(t) - \zetar_{k}),\]
where we used that $\zetar_{k} = 0$.
\end{proof}

It will be shown next that that strong monotonicity of $\psi_{k}$ ensures the existence of an output agreement steady-state solution and that the steady-state solution has additional inverse optimality properties. To see this, consider the following pair of dual network optimization problems.

  \textbf{Optimal Potential Problem:} Let some $\ub = [\ur_{1},\ldots,\ur_{|\Nodeset|}]^{\top} \in \mc{R}(E)$ be given. Consider the following \emph{optimal potential problem}
\begin{align} \tag{OPP2} \label{prob.zExistence}
\begin{split}
\min_{\zb,\vb} \quad &   \sum_{k=1}^{|\Edgeset|}P_{k}(\zr_{k}) + \sum_{i=1}^{|\Nodeset|} \ur_{i} \vr_{i}, \\
 \mbox{s.t.\quad} & \zb =  E^{\top}\vb.
 \end{split}
\end{align}
By its structure, \eqref{prob.zExistence} is an optimal potential problem as defined in \eqref{prob.OPP_Basic}. The potential vector $\vb$ is associated to the linear cost defined by $\ub$, while the tension variables $\zb$ are associated to the integral functions of the coupling nonlinearities. 

 \textbf{Optimal Flow Problem:} The dual problem to \eqref{prob.zExistence} is the following \emph{optimal flow problem}
\begin{align}\tag{OFP2} \label{prob.ODP}
\begin{split}
\min_{\mub} \; &\sum_{k=1}^{|\Edgeset|} P^{\star}_{k}(\mur_{k}) \\
 \mbox{s.t.\;}&  \ub + E\mub = 0, 
 \end{split}
\end{align}
where $P^{\star}_{k}$ is the convex conjugates of $P_{k}$, and $\ub \in \mc{R}(E)$ is a given constant vector. The problem is in compliance with the standard form of optimal flow problems \eqref{prob.OFP_Basic}, as one can introduce artificial divergence variables and add as a cost function the indicator function for the point $\ub$.

\begin{theorem}[{\small Controller Realization}]
Suppose the dynamical network nodes \eqref{sys.Node} are such that the necessary conditions of Theorem \ref{thm.Necessary} are satisfied and the controller dynamics \eqref{sys.Controller1} are such that all $\psi_{k}$ are strongly monotone. Then the network \eqref{sys.Node}, \eqref{eqn.Interconnectionzeta}, \eqref{sys.Controller1}, \eqref{eqn.Interconnectionu} has an output agreement steady-state solution. 
Furthermore, let $\zb$ be the steady-state of the controller in output agreement, then 
(i) $\zb$ is an optimal solution to \eqref{prob.zExistence}, 
(ii) $\mub = \bfpsi(\zb)$ is an optimal solution to \eqref{prob.ODP}, 
(iii) and $\sum_{k=1}^{|\Edgeset|} P^{\star}_{k}(\mur_{k}) + \sum_{k=1}^{|\Edgeset|}  P_{k}(\zr_{k})  = \mub^{\top}\zb$.
\end{theorem}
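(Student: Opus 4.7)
The plan is to analyze the convex program \eqref{prob.zExistence} directly, show it attains its minimum, and read off from its KKT conditions both a solution to the controller equilibrium problem \eqref{prob.Equilibrium2} and the inverse-optimality claims (i)--(iii). Form the Lagrangian
\[
\mc{L}(\zb,\vb,\mub) \;=\; \sum_{k=1}^{|\Edgeset|} P_k(\zr_k) + \ub^\top \vb + \mub^\top(E^\top \vb - \zb),
\]
where $\mub$ is the multiplier for $\zb = E^\top \vb$. Stationarity in $\zb$ yields $\mur_k = \nabla P_k(\zr_k) = \psi_k(\zr_k)$, i.e.\ $\mub = \bfpsi(\zb)$, while stationarity in $\vb$ yields $\ub + E\mub = 0$. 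Any primal-dual optimizer of \eqref{prob.zExistence} therefore produces $\zb \in \mc{R}(E^\top)$ satisfying $\ub = -E\bfpsi(\zb)$, i.e.\ a solution of \eqref{prob.Equilibrium2}. Combined with the solution of \eqref{prob.EquilibriumProblem} guaranteed by the hypothesis invoking Theorem~\ref{thm.Necessary}, the preceding necessary-and-sufficient condition lemma then supplies the desired output agreement steady state of the network.

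The only nonroutine step is to verify that \eqref{prob.zExistence} attains its minimum. By Theorem~\ref{thm.Necessary} we have $\ub \in \mc{R}(E) = \mathrm{span}(\1)^\perp$, since connectedness of $\mc{G}$ gives $\mc{N}(E^\top) = \mathrm{span}(\1)$. Writing $\vb = \alpha \1 + \vb_\perp$ with $\vb_\perp \perp \1$, the objective becomes independent of $\alpha$ because $E^\top \1 = 0$ and $\ub^\top \1 = 0$. Strong monotonicity of each $\psi_k$ makes $P_k$ strongly convex, and injectivity of $E^\top$ on $\mathrm{span}(\1)^\perp$ then makes $\sum_k P_k\bigl([E^\top \vb_\perp]_k\bigr)$ strongly convex and coercive in $\vb_\perp$; this dominates the linear perturbation $\ub^\top \vb_\perp$, producing a unique minimizer. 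This is exactly where the inclusion $\ub \in \mc{R}(E)$ is essential: without it, the linear term could drive the objective to $-\infty$ along the agreement direction.

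For the inverse-optimality claims, (i) is immediate because $\zb$ was constructed as an optimizer of \eqref{prob.zExistence}. For (ii), a direct computation gives
\[
\inf_{\zb,\vb} \mc{L}(\zb,\vb,\mub) \;=\; \begin{cases} -\sum_{k} P_k^\star(\mur_k) & \text{if } \ub + E\mub = 0, \\ -\infty & \text{otherwise,}\end{cases}
\]
so the Lagrangian dual of \eqref{prob.zExistence} is exactly \eqref{prob.ODP}; strong duality (ensured by convexity and by the feasibility established above) identifies $\mub = \bfpsi(\zb)$ as an optimizer of \eqref{prob.ODP}. For (iii), the KKT relation $\mur_k = \nabla P_k(\zr_k)$ is precisely the Fenchel equality $P_k(\zr_k) + P_k^\star(\mur_k) = \mur_k \zr_k$ for each edge; summing over $k \in \Edgeset$ gives $\sum_k P_k^\star(\mur_k) + \sum_k P_k(\zr_k) = \mub^\top \zb$, as claimed. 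The main obstacle is the coercivity/existence argument in the second paragraph; everything else is standard Lagrangian--Fenchel duality in conjunction with results already established in the paper.
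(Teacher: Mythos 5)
Your proof is correct and follows the same overall route as the paper's: form the Lagrangian of \eqref{prob.zExistence}, read the controller equilibrium condition $\ub=-E\bfpsi(\zb)$ off the stationarity conditions, identify the Lagrangian dual as \eqref{prob.ODP}, and invoke the earlier necessary-and-sufficient lemma together with Theorem \ref{thm.Necessary} to assemble the steady state. Two of your steps are more explicit than the paper's and worth noting. First, the paper simply asserts that strong convexity of the $P_{k}$ gives a (unique) solution of \eqref{prob.zExistence} for every $\ub \in \mc{R}(E)$; your decomposition $\vb = \alpha\1 + \vb_{\perp}$, with the observation that the objective is constant in $\alpha$ and strongly convex and coercive in $\vb_{\perp}$ precisely because $\ub \perp \1$ and $E^{\top}$ is injective on $\1^{\perp}$ (for connected $\mc{G}$), supplies the coercivity argument the paper leaves implicit, and correctly isolates that it is $\zb$, not $\vb$, that is unique. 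Second, for claim (iii) the paper argues through the saddle-point equality and the conversion formula $\ub^{\top}\vb = -\mub^{\top}\zb$, whereas you obtain it directly from the edgewise Fenchel equality $P_{k}(\zr_{k}) + P_{k}^{\star}(\mur_{k}) = \mur_{k}\zr_{k}$ implied by $\mur_{k} = \nabla P_{k}(\zr_{k})$; your derivation is shorter and does not need to invoke strong duality a second time. No gaps.
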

\begin{proof}
To prove the first claim, it is sufficient to show that for any $\ub \in \mc{R}(E)$ the equilibrium problem \eqref{prob.Equilibrium2} has a solution $\zb$.
At first we note that if $\psi_{k}$ are strongly monotone, then $P_{k}$ are strongly convex and are defined on $\mathbb{R}$. Thus, \eqref{prob.zExistence} has a unique solution for all $\ub \in \mc{R}(E)$. To prove the first claim, it remains to connect the solution of \eqref{prob.zExistence} to the equilibrium condition \eqref{prob.Equilibrium2}.
Any solution $\zb =  E^{\top}\vb$ in \eqref{prob.zExistence} must satisfy the first-order optimality condition
\[  E \nabla \mathbf{P}(E^{\top}\vb) + \ub = 0, \]
where we use the short-hand notation $\mathbf{P} = \sum_{k=1}^{|\Edgeset|}P_{k}$. Since $\nabla \mathbf{P} = \bfpsi$, the optimal solution $\zb =E^{\top}\vb$ to \eqref{prob.zExistence} solves explicitly the equilibrium condition \eqref{prob.Equilibrium2}, proving the first claim.

Now, to prove the remaining statements of the theorem, we consider the Lagrangian of \eqref{prob.zExistence}, i.e.,
\[\mc{L}(\vb,\zb,\tilde{\mub}) =   \sum_{k=1}^{|\Edgeset|}P_{k}(\zr_{k}) + \sum_{i=1}^{|\Nodeset|} \ur_{i} \vr_{i} + \tilde{\mub}^{\top}(-\zb + E^{\top}\vb), \]
with multiplier $\tilde{\mub}$. Define now the dual function as $s(\tilde{\mub}) = \inf_{\vb,\zb} \mc{L}(\vb,\zb,\tilde{\mub})$. Clearly, $s(\tilde{\mub}) = - \infty$ if $ E\tilde{\mub} + \ub\neq 0$, and otherwise $s(\tilde{\mub})  = -\mathbf{P}_{k}^{\star}(\tilde{\mub})$. Thus, the dual problem $\sup s(\tilde{\mu})$ is equivalent to \eqref{prob.ODP} and the dual solution $\tilde{\mub}$ is in fact the optimal solution to \eqref{prob.ODP}. Together with the first order optimality condition this implies that $  \mub = \tilde{\mub} = \nabla \mathbf{P}(\zb) = \bfpsi(\zb)$. 
The last statement, i.e., the strong duality, follows since it must hold that
\[ \sup_{\tilde{\mub}} \inf_{\vb,\zb} \;  \mc{L}(\vb,\zb,\tilde{\mub}) =  \inf_{\vb,\zb} \sup_{\tilde{\mub}} \;  \mc{L}(\vb,\zb,\tilde{\mub}).\]
This implies that $\sup_{\tilde{\mub}} s(\tilde{\mu})$ must take the same optimal value as  \eqref{prob.zExistence}. The statement follows now immediately since $\sup_{\tilde{\mub}} s(\tilde{\mu})$ has the same value as \eqref{prob.ODP} with negative sign, and $\ub^{\top}\vb = - \mub^{\top}\zb$, where $\mub$ is the optimal solution to \eqref{prob.ODP}.
\end{proof}

The two optimization problems provide, on the one hand, explicit statements about the feasibility of the steady-state independent of the required $\ub$, and, on the other hand, additional duality relations. The internal state of the controller \eqref{sys.Controller1}, $\zd(t)$, can be understood as \emph{tensions}, while the output of the controller, $\mud(t)$, can be understood as the corresponding dual \emph{flows}.  
\begin{remark}[{\small Sector Nonlinearities}]
An alternative assumption that is often imposed on the nonlinearities  $\psi_{k}$ in the literature (as, e.g., in \cite{Chopra2006}) is that they are \emph{sector nonlinearities} (see, e.g., \cite[Def. 6.2]{Khalil2002}). The strong monontonicity condition is clearly a stronger assumption.\footnote{After shifting the origin appropriately, a strongly monotone nonlinearity is always a sector nonlinearity.} 
However, the dynamics \eqref{sys.Controller1} with a sector nonlinearity $\psi_{k}$ that is not strongly monotone is not necessarily maximally equilibrium independent passive and it cannot be guaranteed that for any required steady-state input $\ub$ the controller dynamics \eqref{sys.Controller1} is passive with respect to the required steady-state.
Thus, in order to ensure convergence of the network without knowing $\ub$ in advance, the strong monotonicity assumption becomes a necessary requirement.
\end{remark}

\begin{figure*}
\begin{center}
\subfigure[Signals of the Closed-Loop Dynamical System]{\includegraphics[trim= 2cm 16cm 1cm 2.5cm, scale=0.45]{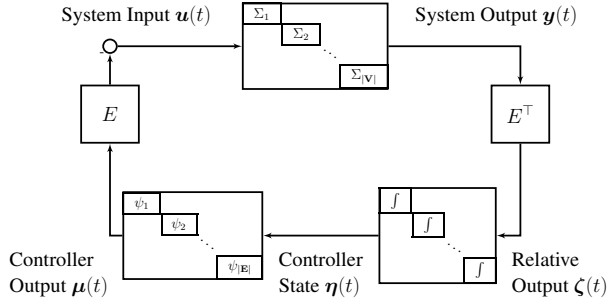}} \hfill
\subfigure[Variables of the Network Theoretic Framework]{\includegraphics[trim= 2cm 16cm 2cm 2.5cm, scale=0.45]{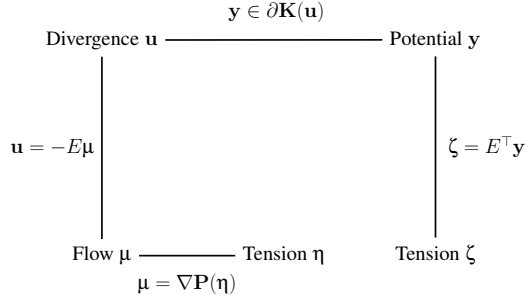}} 
\end{center}
\caption{The block diagram of the closed loop system (a) and the abstracted illustration of the network variables (b). }
\label{fig.Diagram}
\end{figure*}

\begin{table*}
\begin{center}
\begin{tabular}{|c l|c l|c|c|c|}
\hline
    \multicolumn{2}{|c|}{\textbf{Dynamic Signal}} & \multicolumn{2}{|c|}{\textbf{Network Variable}} & \textbf{Relation} & \textbf{Cost Function} & \textbf{Optimization Problem} \\
\hline 
\hline 
$\yd(t)$ & system output &  $\yb$ & potential &  $\yb = \kyf(\ub)$  & $K_{i}^{\star}(\yr_{i})$ &OPP1 \\ 
\hline 
$\zetad(t)$ & relative output & $\zetab$ & tension & $\zetab = E^{\top}\yb$ & $I_{0}(\zetar_{k})$ &OPP1 \\ 
\hline 
\hline 
$\ud(t)$ & system input & $\ub$ & divergence & $\ub = \kyf^{-1}(\yb)$ & $K_{i}(\ur_{i})$ &OFP1\\ 
\hline
$\mud(t)$ & controller output   & $\mub$ & flow & $\ub + E\mub = 0$ & $P_{k}^{\star}(\mur_{k})$ &OFP2 \\ 
\hline
\hline 
 $\vd(t)$ & --  & $\vb$ & potential & $\zb = E^{\top}\vb$ & $\ur_{k}\vr_{k}$ & OPP2\\ 
\hline
$\zd(t)$ & controller state  & $\zb$ & tension & $\mub = \bfpsi(\zb)$ & $P_{k}(\zr_{k})$ &OPP2\\ 
\hline 
\end{tabular}

\caption{Relation between variables involved in the dynamical system and their static counterparts. } \label{tab.Variables} 
\end{center}
\end{table*}

\subsection{The Closed-Loop Perspective}

Having established conditions that ensure the existence and the optimality properties of an output agreement steady-state solution, it remains to prove convergence. 
\begin{theorem}[{\small Output Agreement}]
Consider the dynamical network \eqref{sys.Node}, \eqref{eqn.Interconnectionzeta}, \eqref{sys.Controller1}, \eqref{eqn.Interconnectionu} and suppose that the nodes \eqref{sys.Node} are all output-strictly maximal passive with $\mc{U}_{i} = \mathbb{R}$ and $\mc{Y}_{i} = \mathbb{R}$ and all coupling nonlinearities $\psi_{k}$ are strongly monotone. Then there exist $\ub$, $\yb$, $\zb$, and $\mub$ being optimal solutions to \eqref{prob.OFP_Unconstrained}, \eqref{prob.PotentialUnconstrained2}, \eqref{prob.zExistence}, and \eqref{prob.ODP}, such that
$\lim_{t \rightarrow \infty} \ud(t) \rightarrow \ub$, $\lim_{t \rightarrow \infty} \yd(t) \rightarrow \yb$, $\lim_{t \rightarrow \infty} \zd(t) \rightarrow \zb$, and $\lim_{t \rightarrow \infty} \mud(t) \rightarrow \mub$. In particular, the dynamical network  converges to output agreement, i.e., $\lim_{t\rightarrow \infty} \yd(t) \rightarrow \beta \1$.
\end{theorem}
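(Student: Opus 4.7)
The plan is to build on the existence and inverse optimality results already established and then carry out a Lyapunov/LaSalle-type convergence argument that closes the loop between the plant and controller layers.

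First, I would invoke the Existence Corollary (applicable since $\mc{U}_i = \mathbb{R}$, $\mc{Y}_i = \mathbb{R}$) to produce $\ub, \yb$ solving \eqref{prob.OFP_Unconstrained}--\eqref{prob.PotentialUnconstrained2}, and then the Controller Realization Theorem to obtain $\zb, \mub$ solving \eqref{prob.zExistence}--\eqref{prob.ODP} that are compatible with this $(\ub,\yb)$; note that since $\yb = \beta\1$ we have $\zetab = E^\top\yb = 0$. By output-strict maximal equilibrium independent passivity of each node, there exists a positive semidefinite storage function $S_i(x_i(t))$ satisfying \eqref{eqn.Passivity3} at the pair $(\ur_i, \yr_i)$, and by the controller realization lemma each edge admits the Bregman-type storage function $W_k(\z_k(t),\zr_k) = P_k(\z_k(t)) - P_k(\zr_k) - \nabla P_k(\zr_k)(\z_k(t)-\zr_k)$ which is positive definite, indeed strongly convex in $\z_k(t)$ because $\psi_k$ is strongly monotone.

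Define $V(t) = \sum_{i=1}^{|\Nodeset|} S_i(x_i(t)) + \sum_{k=1}^{|\Edgeset|} W_k(\z_k(t),\zr_k)$. Summing the two passivity inequalities yields $\dot{V} \leq -\sum_i \rho_i \|y_i(t) - \yr_i\|^2 + (\yd(t)-\yb)^\top(\ud(t)-\ub) + (\mud(t)-\mub)^\top(\zetad(t)-\zetab)$. Substituting the interconnection equations $\ud = -E\mud$, $\zetad = E^\top\yd$ together with the dual steady-state identities $\ub = -E\mub$, $\zetab = E^\top\yb$, the two cross terms cancel via the same telescoping used in the proof of Theorem~\ref{thm.BasicConvergence}, leaving $\dot{V} \leq -\sum_i \rho_i \|y_i(t) - \yr_i\|^2 \leq 0$. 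Hence $V$ is nonincreasing and bounded, and strong convexity of each $P_k$ makes $W_k(\cdot,\zr_k)$ radially unbounded, so $\zd(t)$ is bounded; consequently $\mud(t) = \bfpsi(\zd(t))$ and $\ud(t) = -E\mud(t)$ remain bounded as well. Barbalat's lemma then delivers $y_i(t) \to \yr_i$ for every $i$, so $\yd(t) \to \yb = \beta\1$ and $\zetad(t) \to 0$.

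To close the argument for the remaining variables I would invoke LaSalle's invariance principle: every $\omega$-limit point lies in the largest invariant set contained in $\{\yd = \yb\}$. On such a set $\dot{\zd} = \zetad = E^\top\yb = 0$, so $\zd$ is constant, and hence $\mud = \bfpsi(\zd)$ and $\ud = -E\mud$ are constant and define a genuine equilibrium of the full network. By Theorem~\ref{thm.Necessary} and the Controller Realization Theorem these constants are inverse-optimal with respect to \eqref{prob.OFP_Unconstrained}--\eqref{prob.PotentialUnconstrained2} and \eqref{prob.zExistence}--\eqref{prob.ODP} respectively, which gives all four convergence statements.

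The main obstacle I expect lies in going from $y_i(t) \to \yr_i$ to convergence of $\zd, \mud, \ud$. The plant's quadratic decay term only dominates output error, while the controllers are merely passive (no output-strict term), so no rate estimate for $\zd$ is available; convergence must come through precompactness plus invariance. Strong monotonicity of $\psi_k$ is precisely what makes $W_k$ radially unbounded in $\z_k$ and thus provides the required precompactness on the controller side; on the plant side one implicitly relies on the storage functions $S_i$ furnishing a state-space bound (a zero-state-detectability-type property), which should be read as a standing regularity assumption on the nodes alongside maximal equilibrium independent passivity.
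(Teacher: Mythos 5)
Your proposal is correct and follows essentially the same route as the paper: establish the steady state via the four optimization problems (Existence and Controller Realization results), verify Assumption \ref{ass.Passivity} with the node storage functions and the Bregman functions $W_k$, and conclude via the summed-storage/Barbalat argument of Theorem \ref{thm.BasicConvergence}. The only difference is that you spell out the convergence of $\zd$, $\mud$, $\ud$ with a LaSalle/precompactness argument where the paper simply asserts it ``follows immediately'' from $\yb\in\kyf(\ub)$ --- your added care there is welcome and does not change the substance.
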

\begin{proof}
The assumptions ensure that the four network optimization problems \eqref{prob.OFP_Unconstrained}, \eqref{prob.PotentialUnconstrained2}, \eqref{prob.zExistence}, and \eqref{prob.ODP} have an optimal solution.
Thus, a steady-state solution exists. Output-strictly maximal equilibrium independent passivity of the node dynamics ensures that for all $i \in \Nodeset$ there exists a storage function $S_{i}$ such that $\dot{S}_{i} \leq - \rho_{i}\|y_{i}(t) - \yr_{i}\|^{2} + (y_{i}(t) - \yr_{i})(u_{i}(t) - \ur_{i})$. Additionally, maximal equilibrium independent passivity of the controller dynamics ensures that for all $k \in \Edgeset$ there exists a storage function $W_{k}$ satisfying $\dot{W}_{k} \leq (\mu_{k}(t) - \mur_{k})(\zeta_{k}(t) - \zetar_{k})$. 
Thus, the basic convergence result of Theorem \ref{thm.BasicConvergence} applies directly, proving convergence of the output trajectories, i.e., $\lim_{t \rightarrow \infty} \yd(t) \rightarrow \yb$.
Since $\yb \in \kyf(\ub)$, it follows that $\ud(t) $ must converge to $\ub$. The convergence of $\mud(t)$ and $\zetad(t)$ to $\mub$ and $\zetab$, respectively, follows immediately. 
\end{proof}

We can summarize the results of this section as follows.
All signals of the dynamical network \eqref{sys.Node}, \eqref{eqn.Interconnectionzeta}, \eqref{sys.Controller1}, \eqref{eqn.Interconnectionu} have static counterparts in the network optimization theory framework. 
The static counterparts of the outputs $\yd(t)$ are the solutions $\yb$ of an optimal potential problem \eqref{prob.PotentialUnconstrained2}. Equivalently, the corresponding dual variables, i.e., divergence variables in \eqref{prob.OFP_Unconstrained}, $\ub$, are the static counterparts to the control inputs $\ud(t)$.
The controller state $\zd(t)$ and the output $\mud(t)$ have the tension and flow variables of \eqref{prob.zExistence} and \eqref{prob.ODP}, respectively, as their static counterparts.
 We visualize the connection between the dynamic variables of the closed-loop system and the static network variables in Figure \ref{fig.Diagram}. Note that the signals in the dynamical system influence each other in a closed-loop structure, while there is no equivalent closed-loop relation for the network variables. In particular, the two tension variables $\zetab$ and $\zb$ are not connected, while their dynamic counterparts $\zetad(t)$ and $\zd(t)$ are connected by an integrator.
 Additionally, a summary of all variables involved in the output agreement problem together with their static counterparts is provided in Table \ref{tab.Variables}. For the sake of completeness, we include also the dynamic variable $\vd(t)$, which corresponds to the potential variables $\vb$ of \eqref{prob.zExistence}. Although we did not consider $\vd(t)$ explicitly in our discussion of the dynamical network, we can define it in accordance to \eqref{prob.zExistence} as $\zd(t) = E\vd(t)$.
 
%

%

\section{A General Dynamic Network Analysis Framework} \label{sec.NetworkAnalysis}

The full potential of the established duality framework can be seen if more general networks of maximal equilibrium independent passive systems are considered.
 A key component in the analysis of the previous section was that the controller dynamics \eqref{sys.Controller1} were maximal equilibrium independent passive systems. 
We will generalize the previous results for controllers  that are arbitrary  maximal equilibrium independent passive systems. In particular, we assume now that the controllers \eqref{sys.Controller1} are replaced by dynamical systems of the form

\begin{align}
\begin{split} \label{sys.Controller2}
\Pi_{k}: \quad \dot{\z}_{k} &= \phi_{k}(\z_{k},\zeta_{k}) \\
\mu_{k} &= \psi_{k}(\z_{k},\zeta_{k}), \quad k \in \Edgeset.
\end{split}
\end{align}
\begin{assumption}
The controllers \eqref{sys.Controller2} are maximal equilibrium independent passive with input set $\mc{Z}_{k}$, output set $\mc{M}_{k}$, and maximal monotone input-output relation $\gammar_{k} \subset \mathbb{R}^{2}$.
\end{assumption}

To each of the dynamics \eqref{sys.Controller2} one can associate now a closed, proper, convex function $\Gamma_{k}: \mathbb{R} \rightarrow \mathbb{R}$ such that
\begin{align}
\partial \Gamma_{k} = \gammar_{k}.
\end{align}

Now, the formalism developed in the previous section can be generalized as the asymptotic behavior of the network \eqref{sys.Node}, \eqref{eqn.Interconnectionzeta}, \eqref{sys.Controller2}, \eqref{eqn.Interconnectionu}  can be related to the following pair of dual network optimization problems.

 \textbf{Generalized Optimal Flow Problem} Consider the following \emph{optimal flow problem}
\begin{align}  \tag{GOFP}
\begin{split} \label{prob.GenOFP}
\min_{\ub,\mub} \; &\sum_{i=1}^{|\Nodeset|} K_{i}(\ur_{i}) + \sum_{k=1}^{|\Edgeset|}\Gamma_{k}^{\star}(\mur_{k}) \\
& \ub + E\mub = 0,
\end{split}
\end{align}
where $\Gamma_{k}^{\star}$ denotes the convex conjugate of $\Gamma_{k}$. This is a generalized version of \eqref{prob.OFP_Unconstrained}. Still the divergence $\ub$  are associated to the cost functions defined by the integral of the nodes input-output relations. However, now the cost function $\Gamma_{k}^{\star}$ is associated to the flow variables $\mur_{k}$.

 \textbf{Generalized Optimal Potential Problem} Dual to the generalized optimal flow problem, we also define the generalized optimal potential problem as
\begin{align}  \tag{GOPP}
\begin{split} \label{prob.GenOPP}
\min_{\yb,\zetab} \; & \sum_{i=1}^{|\Nodeset|} K_{i}^{\star}(\yr_{i})  + \sum_{k=1}^{|\Edgeset|} \Gamma_{k}(\zeta_{k}) \\
& \zetab = E^{\top}\yb.
\end{split}
\end{align}
In contrast to \eqref{prob.PotentialUnconstrained2}, this problem does not necessarily force the potential differences, i.e., the tensions, to be zero, but penalizes them with the general cost functions $\Gamma_{k}$.

The general network optimization problems \eqref{prob.GenOFP} and \eqref{prob.GenOPP} are related to the asymptotic behavior of the network of maximal equilibrium independent passive systems.

\begin{theorem}[{\small Generalized Network Convergence Theorem}]
 Consider the dynamical network \eqref{sys.Node}, \eqref{eqn.Interconnectionzeta},  \eqref{sys.Controller2}, \eqref{eqn.Interconnectionu}. Assume all node dynamics \eqref{sys.Node} are output strictly maximal equilibrium independent passive and all controller \eqref{sys.Controller2} dynamics are maximal equilibrium independent passive, and the two network optimization problems  \eqref{prob.GenOFP}, \eqref{prob.GenOPP} have a feasible solution. 
 Then there exists constant vectors $\ub$, $\mub$ solving \eqref{prob.GenOFP}, and $\yb$, $\zetab$ solving \eqref{prob.GenOPP}, such that
$\lim_{t \rightarrow \infty} \ud(t) \rightarrow \ub$, $\lim_{t \rightarrow \infty} \mud(t) \rightarrow \mub$, $\lim_{t \rightarrow \infty} \yd(t) \rightarrow \yb$, and $\lim_{t \rightarrow \infty} \zetad(t) \rightarrow \zetab$.
\end{theorem}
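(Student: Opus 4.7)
The plan is to imitate the two-step strategy already used in the earlier output agreement theorem: first construct a candidate steady state from the KKT conditions of the dual pair \eqref{prob.GenOFP}, \eqref{prob.GenOPP}, and then run a storage-function argument on the closed loop, taking advantage of the fact that the cross terms in the passivity inequalities will cancel due to the network interconnection structure.

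First I would use the assumed feasibility of \eqref{prob.GenOFP} and \eqref{prob.GenOPP}. Since all $K_i$ and $\Gamma_k$ are closed proper convex, standard convex duality (see \cite{Rockafellar1997}) gives a primal-dual optimal tuple $(\ub,\mub,\yb,\zetab)$ such that the KKT conditions hold: $\ub + E\mub = 0$, $\zetab = E^\top \yb$, $\yb \in \partial\mathbf{K}(\ub) = \kyf(\ub)$, and $\mub \in \partial\boldsymbol{\Gamma}(\zetab) = \gammar(\zetab)$. Precisely these four relations are the steady-state conditions for the interconnected dynamics \eqref{sys.Node}, \eqref{eqn.Interconnectionzeta}, \eqref{sys.Controller2}, \eqref{eqn.Interconnectionu}, so $(\ub,\mub,\yb,\zetab)$ is a valid equilibrium of the closed loop. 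This is the inverse-optimality step, and it is essentially the same argument used to prove Theorem~\ref{thm.Necessary}, now applied to both sides of the network simultaneously.

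Next I would carry out the convergence analysis. Define the composite storage function $V(\xd,\zd) = \sum_{i \in \Nodeset} S_i(x_i) + \sum_{k \in \Edgeset} W_k(\z_k)$, where $S_i$ is the storage function associated with the steady state $(\ur_i,\yr_i)$ of node $i$ (which exists by output-strict maximal equilibrium independent passivity) and $W_k$ is the storage function associated with the steady state $(\zetar_k,\mur_k)$ of controller $k$. Summing the passivity inequalities \eqref{eqn.Passivity3} and the passivity inequality for the controllers yields
\begin{align*}
\dot V \leq -\sum_{i \in \Nodeset} \rho_i \|y_i(t)-\yr_i\|^2 + (\yd(t)-\yb)^\top(\ud(t)-\ub) + (\mud(t)-\mub)^\top(\zetad(t)-\zetab).
\end{align*}
Using $\ud = -E\mud$, $\ub = -E\mub$, $\zetad = E^\top \yd$, and $\zetab = E^\top \yb$, the two cross terms collapse via the conversion formula, i.e.\ $(\yd-\yb)^\top(\ud-\ub) = -(\zetad-\zetab)^\top(\mud-\mub)$, so $\dot V \leq -\sum_{i} \rho_i \|y_i(t)-\yr_i\|^2 \leq 0$. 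Since $V$ is bounded below by zero, Barbalat's lemma, used as in the proof of Theorem~\ref{thm.BasicConvergence}, gives $\lim_{t\to\infty} \yd(t) = \yb$, and hence $\lim_{t\to\infty}\zetad(t) = E^\top \yb = \zetab$.

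Finally I would use the steady-state relations to propagate the convergence of the outputs to the remaining variables. Because the pair $(\ub,\yb)$ satisfies $\yb \in \kyf(\ub)$ coming from a KKT point of \eqref{prob.GenOFP}, the state trajectories $x_i(t)$ are attracted to the equilibrium set consistent with $\yr_i$, which forces $\ud(t) \to \ub$; the interconnection $\ud = -E\mud$ then drives $E\mud(t)\to E\mub$, and maximality of the controller relation, together with $\zetad(t)\to\zetab$, pins $\mud(t)\to \mub$ on the range space (using that the controller relations are single-valued on the interior of their domains, or appealing to LaSalle on the largest invariant set in $\{\dot V=0\}$). The main obstacle I expect is precisely this last identification: when the input-output relations are genuinely set-valued (as for integrators), convergence of $\yd$ does not immediately imply convergence of $\ud$ or $\mud$ to a unique point. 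I would handle this either by an invariance argument combined with the monotonicity of $\kyf$ and $\gammar$, or by restricting attention to the well-posed trajectory dictated by the dynamics, leveraging the fact that the limit point selected by the trajectory must lie in the closed graph of the respective relations.
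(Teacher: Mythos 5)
Your proposal follows essentially the same route as the paper: the KKT conditions of the dual pair \eqref{prob.GenOFP}/\eqref{prob.GenOPP} are identified with the closed-loop steady-state equations, and convergence is then obtained by summing the node and controller storage functions, cancelling the cross terms through the interconnection, and invoking Barbalat's lemma as in Theorem~\ref{thm.BasicConvergence}. If anything, you are more explicit than the paper about the final step of propagating $\yd(t)\rightarrow\yb$ to the inputs and controller outputs when the equilibrium relations are set-valued --- a point the paper's proof passes over by simply stating that convergence of the trajectories follows from the basic convergence result.
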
 
\begin{proof}
First, we show that if the two network optimization problems have a feasible solution, this solution represents an equilibrium for the dynamical network.
Consider again the Lagrangian function of \eqref{prob.GenOFP} with Lagrange multiplier $\tilde{\yb}$, i.e.,
\[\mc{L}(\ub,\mub,\tilde{\yb}) = \sum_{i=1}^{|\Nodeset|} K_{i}(\ur_{i}) + \sum_{k=1}^{|\Edgeset|}\Gamma_{k}^{\star}(\mur_{k}) + \tilde{\yb}^{\top}( -\ub - E\mub).\] 
Define now $\tilde{\zetab} = E^{\top}\tilde{\yb}$. If \eqref{prob.GenOFP} has an optimal solution, this solution satisfies the optimality conditions
\begin{align}
\begin{split} \label{eqn.GenFOC}
\partial \mathbf{K}_{i}(\ub) - \tilde{\yb} \in 0, \quad
\partial \boldsymbol{\Gamma}^{\star}(\mub) - \tilde{\zetab} \in 0 \\
\ub + E\mub  = 0, \quad \tilde{\zetab} = E^{\top}\tilde{\yb},
\end{split}
\end{align}
where we use the notation $\boldsymbol{\Gamma}^{\star}(\mub) = \sum_{k=1}^{|\Edgeset|}\Gamma_{k}^{\star}(\mur_{k})$.
Since $\boldsymbol{\Gamma}(\zetab) = \sum_{k=1}^{|\Edgeset|}\Gamma_{k}(\zetar_{k})$ is a closed convex function it follows from the inversion of the subgradients (i.e., \cite[Thm. 23.5]{Rockafellar1997}) that $\partial \boldsymbol{\Gamma}^{\star}(\mub) $ is equivalent to $\mub \in \partial \boldsymbol{\Gamma}(\tilde{\zetab})$.
Thus, if \eqref{prob.GenOFP} has an optimal primal solution and dual solution, then these solutions are an equilibrium configuration of the dynamical network.
To complete this part of the proof, it remains to show that $\tilde{\yb}$ and $\tilde{\zetab}$ are optimal solutions to \eqref{prob.GenOPP}. Define $s(\tilde{\yb},\tilde{\zetab}) = \inf_{\ub,\mub} \mc{L}(\ub,\mub,\tilde{\yb})$ with $\tilde{\zetab} = E^{\top}\tilde{\yb}$. Clearly, $s(\tilde{\yb},\tilde{\zetab}) = - \sum_{i=1}^{|\Nodeset|} K_{i}^{\star}(\tilde{\yr}_{k}) - \sum_{k=1}^{|\Edgeset|}\Gamma_{k}^{\star\star}(\tilde{\zetar}_{k})$. Since $\Gamma_{k}^{\star\star} = \Gamma_{k}$ it can be readily seen that an optimal solution to $\inf_{\tilde{\yb}, \tilde{\zetab}}s(\tilde{\yb}\tilde{\zetab})$ is an optimal solution to \eqref{prob.GenOPP}. Thus, optimal solutions to \eqref{prob.GenOFP}, \eqref{prob.GenOPP} are equilibrium configurations for the network. By the same argument follows that all possible network equilibrium configurations are solution to \eqref{prob.GenOFP}, \eqref{prob.GenOPP}.

It remains to prove convergence. Consider an network equilibrium configuration  $\ub$, $\yb$, $\mub$, and $\zetab$. By assumption, the node dynamics are output strictly maximal equilibrium independent passive and since  \eqref{prob.GenOFP}, \eqref{prob.GenOPP} are feasibel $\ub \in \mc{U}$ and $\yb \in \mc{Y}$. All controllers are maximal equilibrium independent passive and since  \eqref{prob.GenOFP}, \eqref{prob.GenOPP} are feasible, $\mub \in \mc{M}$, and $\zetab \in \mc{Z}$. Convergence of the trajectories follows now from the basic convergence result.
\end{proof}

\begin{remark}[{\small Revisiting Output Agreement}]
The general result includes the output agreement problem studied in the previous section. There, the equilibrium input-output relation $\gammar_{k}$ of the controller \eqref{sys.Controller1} is the vertical line through the origin, such that $\Gamma_{k}$ is the indicator function for the origin.
Its convex conjugate is $\Gamma_{k}^{\star}(\mur) = 0$. Now, \eqref{prob.GenOFP} and \eqref{prob.GenOPP} reduce to the original problems \eqref{prob.OFP_Unconstrained} and \eqref{prob.PotentialUnconstrained2}. \\
\end{remark}

\section{Application: Analysis of a Traffic Dynamics Model} \label{sec.Applications}
The potential of the proposed network optimization interpretation is now illustrated on the analysis of a nonlinear traffic dynamics models.
The considered model is an \emph{optimal velocity model}, as proposed in \cite{Bando1995}, \cite{Helbing1998}, with the following assumtions: (i) the drivers are heterogeneous and have different ``preferred" velocities, (ii) the influence between cars is bi-directional, and (iii) vehicles can overtake other vehicles.
Each vehicle adjusts its velocity $v_{i}$ according to
\begin{align} \label{sys.Traffic1}
\dot{v}_{i} = \kappa_{i}[V_{i}(\Delta p) - v_{i}],
\end{align} 
where $\kappa_{i} > 0$ is a constant and the adjustment $V_i(\Delta p)$ depends on the relative position to other vehicles, i.e., $\Delta p = p_{j} - p_{i}$, as
\begin{align} \label{sys.Traffic2}
 V_{i}(\Delta p) = V_{i}^{0} + V_{i}^{1}\sum_{j \in \mc{N}(i)} \tanh(p_{j} - p_{i}).
\end{align}
Here $\mathcal{N}(i)$ is used to denote the neighboring vehicles influencing vehicle $i$. Throughout this example we assume that the set of neighbors to a vehicle is not changing over time.
 The constants $V_{i}^{0} > 0$ are ``preferred velocities" and $V_{i}^{1} > 0$ are ``sensitivities'' of the drivers. In the following we assume $V_{i}^{0} \neq V_{j}^{0}$ (i.e., heterogeneity). 

The model can be represented in the form \eqref{sys.Node}, \eqref{eqn.Interconnectionzeta}, \eqref{eqn.Interconnectionu}, \eqref{sys.Controller2}.
The node dynamics can be identified as 
\begin{align}
\dot{v}_{i}(t) = \kappa_{i}[- v_{i}(t) + V_{i}^{0} + V_{i}^{1} u_{i}(t)],\; y_{i}(t) = v_{i}(t),
\end{align}
 with the velocity $v_{i}(t)$ being the node state. The input to each vehicle computes as $u_{i}(t) := \sum_{j \in \mc{N}(i)} \tanh(p_{j}(t) - p_{i}(t))$.  The relative velocities of neighboring vehicles are $\zetad(t) = E^{\top}\yd$. Now, since $\dot{p}_{i} = v_{i}$, we can define the relative positions of neighboring  vehicles as $\z_{k}(t)   = p_{j}(t) - p_{i}(t)$, where edge $k $ connects nodes $i$ and $j$. In vector notation, the coupling can be represented as
 \begin{align}
 \begin{split} \label{sys.TrafficCoupling}
 \dot{\zd} &= \zetad\\
 \mud &= \tanh(\zd),
 \end{split}
 \end{align}
 and $\ud = -E\mud$, where $\tanh(\cdot)$ is here the vector valued function with each entry being the $\tanh$ of the respective entry of $\zd$.

 The node dynamics are output strictly maximal equilibrium independent passive systems. The equilibrium input-output map is the affine function
 $k_{\yr,i}(\ur_{i}) = V_{i}^{0} +V_{i}^{1}\ur_{i}$
 and a corresponding storage function is $S_{i} = \frac{1}{2\kappa_{i} V_{i}^{1}}(v_{i}(t) - \mathrm{v}_{i})^{2}$, where $\mathrm{v}_{i}$ is the desired constant velocity.
The objective functions associated to the node dynamics are the quadratic functions
\begin{align}
K_{i}(\ur_{i}) = \frac{V_{i}^{1}}{2}\ur_{i}^{2} + V_{i}^{0}\ur_{i} \quad \mbox{and} \quad K^{\star}_{i}(\yr_{i}) = \frac{1}{2V_{i}^{1}}(\yr_{i} - V_{i}^{0})^{2}.
\end{align}

\begin{figure*}[t!]
\begin{center}
\subfigure[$P^{\star}(\mur)$]{\scalebox{0.4}{
%
%
%
%
\begin{tikzpicture}

\begin{axis}[%
width=4.52083333333333in,
height=3.565625in,
scale only axis,
xmin=-2, xmax=2,
xmajorgrids,
ymin=0, ymax=2,
ymajorgrids,
xlabel={{\huge $\mu$}},
]
\addplot [
color=blue,
solid,
line width=3.0pt
]
coordinates{
 (-0.999999,0.693139426236731)(-0.9,0.494631937214073)(-0.8,0.368064207168497)(-0.7,0.270438092753954)(-0.6,0.192744757021757)(-0.5,0.130812035941137)(-0.4,0.0822828785050518)(-0.3,0.0457005415253128)(-0.2,0.0201355135506888)(-0.1,0.00500836684635683)(0,0)(0.1,0.00500836684635683)(0.2,0.0201355135506888)(0.3,0.0457005415253128)(0.4,0.0822828785050518)(0.5,0.130812035941137)(0.6,0.192744757021757)(0.7,0.270438092753954)(0.8,0.368064207168497)(0.9,0.494631937214073)(0.9999999,0.693146290017787) 
};

\end{axis}

\begin{axis}[%
width=4.52083333333333in,
height=3.565625in,
scale only axis,
xmin=0, xmax=1,
ymin=0, ymax=1,
hide x axis,
hide y axis]
\addplot [
color=blue,
solid,
line width=3.0pt
]
coordinates{
 (0.75,1)(0.75,0.34) 
};

\addplot [
color=blue,
solid,
line width=3.0pt
]
coordinates{
 (0.25,1)(0.25,0.34)
};

\addplot [
color=black,
dashed,
line width=3.0pt]
coordinates{
 (0.25,1)(0.25,0)
};

\addplot [
color=black,
dashed,
line width=3.0pt]
coordinates{
 (0.75,1)(0.75,0)
};
\end{axis}
\end{tikzpicture}}}
\subfigure[$\mur = \nabla P(\z) \; (=: \psi(\z))$]{\scalebox{0.4}{
%
%
%
%
\begin{tikzpicture}

\begin{axis}[%
width=4.52083333333333in,
height=3.565625in,
scale only axis,
xmin=-10, xmax=10,
xmajorgrids,
ymin=-2, ymax=2,
ymajorgrids,
xlabel={{\huge $\z$}},
ylabel={{\huge $\mu$}}]
]
\addplot [
color=blue,
solid,
line width=3.0pt
]
coordinates{
 (-10,-0.999999995877693)(-9.9,-0.999999994965003)(-9.8,-0.99999999385024)(-9.7,-0.999999992488666)(-9.6,-0.999999990825637)(-9.5,-0.999999988794407)(-9.4,-0.999999986313458)(-9.3,-0.99999998328322)(-9.2,-0.999999979582079)(-9.1,-0.999999975061495)(-9,-0.999999969540041)(-8.9,-0.999999962796122)(-8.8,-0.999999954559081)(-8.7,-0.999999944498337)(-8.6,-0.999999932210116)(-8.5,-0.999999917201249)(-8.4,-0.999999898869378)(-8.3,-0.999999876478781)(-8.2,-0.999999849130844)(-8.1,-0.999999815728)(-8,-0.999999774929676)(-7.9,-0.999999725098492)(-7.8,-0.99999966423455)(-7.7,-0.999999589895169)(-7.6,-0.999999499096851)(-7.5,-0.999999388195546)(-7.4,-0.999999252740403)(-7.3,-0.999999087295143)(-7.2,-0.999998885219883)(-7.1,-0.999998638404658)(-7,-0.999998336943945)(-6.9,-0.999997968739121)(-6.8,-0.999997519012918)(-6.7,-0.999996969716367)(-6.6,-0.999996298804454)(-6.5,-0.999995479351404)(-6.4,-0.9999944784701)(-6.3,-0.999993255992273)(-6.2,-0.99999176285651)(-6.1,-0.999989939139396)(-6,-0.999987711650796)(-5.9,-0.999984990996806)(-5.8,-0.99998166799256)(-5.7,-0.99997760928099)(-5.6,-0.999972651981831)(-5.5,-0.999966597156304)(-5.4,-0.99995920182544)(-5.3,-0.999950169222121)(-5.2,-0.999939136886199)(-5.1,-0.999925662125794)(-5,-0.999909204262595)(-4.9,-0.999889102950554)(-4.8,-0.999864551700761)(-4.7,-0.999834565554297)(-4.6,-0.999797941612184)(-4.5,-0.999753210848027)(-4.4,-0.999698579283881)(-4.3,-0.999631856190073)(-4.2,-0.999550366459533)(-4.1,-0.999450843687797)(-4,-0.999329299739067)(-3.9,-0.999180865670028)(-3.8,-0.998999597785841)(-3.7,-0.998778241281131)(-3.6,-0.998507942332327)(-3.5,-0.998177897611199)(-3.4,-0.997774927934279)(-3.3,-0.997282960099142)(-3.2,-0.996682397839651)(-3.1,-0.9959493592219)(-3,-0.99505475368673)(-2.9,-0.993963167350583)(-2.8,-0.992631520201128)(-2.7,-0.991007453678118)(-2.6,-0.989027402201099)(-2.5,-0.98661429815143)(-2.4,-0.98367485769368)(-2.3,-0.980096396266191)(-2.2,-0.975743130031452)(-2.1,-0.970451936613454)(-2,-0.964027580075817)(-1.9,-0.956237458127739)(-1.8,-0.946806012846268)(-1.7,-0.935409070603099)(-1.6,-0.921668554406471)(-1.5,-0.905148253644866)(-1.4,-0.885351648202263)(-1.3,-0.861723159313306)(-1.2,-0.833654607012155)(-1.1,-0.80049902176063)(-1,-0.761594155955765)(-0.9,-0.716297870199025)(-0.799999999999999,-0.664036770267848)(-0.699999999999999,-0.604367777117163)(-0.6,-0.537049566998035)(-0.5,-0.46211715726001)(-0.399999999999999,-0.379948962255224)(-0.299999999999999,-0.29131261245159)(-0.199999999999999,-0.197375320224903)(-0.0999999999999996,-0.0996679946249555)(0,0)(0.0999999999999996,0.0996679946249555)(0.199999999999999,0.197375320224903)(0.299999999999999,0.29131261245159)(0.399999999999999,0.379948962255224)(0.5,0.46211715726001)(0.6,0.537049566998035)(0.699999999999999,0.604367777117163)(0.799999999999999,0.664036770267848)(0.9,0.716297870199025)(1,0.761594155955765)(1.1,0.80049902176063)(1.2,0.833654607012155)(1.3,0.861723159313306)(1.4,0.885351648202263)(1.5,0.905148253644866)(1.6,0.921668554406471)(1.7,0.935409070603099)(1.8,0.946806012846268)(1.9,0.956237458127739)(2,0.964027580075817)(2.1,0.970451936613454)(2.2,0.975743130031452)(2.3,0.980096396266191)(2.4,0.98367485769368)(2.5,0.98661429815143)(2.6,0.989027402201099)(2.7,0.991007453678118)(2.8,0.992631520201128)(2.9,0.993963167350583)(3,0.99505475368673)(3.1,0.9959493592219)(3.2,0.996682397839651)(3.3,0.997282960099142)(3.4,0.997774927934279)(3.5,0.998177897611199)(3.6,0.998507942332327)(3.7,0.998778241281131)(3.8,0.998999597785841)(3.9,0.999180865670028)(4,0.999329299739067)(4.1,0.999450843687797)(4.2,0.999550366459533)(4.3,0.999631856190073)(4.4,0.999698579283881)(4.5,0.999753210848027)(4.6,0.999797941612184)(4.7,0.999834565554297)(4.8,0.999864551700761)(4.9,0.999889102950554)(5,0.999909204262595)(5.1,0.999925662125794)(5.2,0.999939136886199)(5.3,0.999950169222121)(5.4,0.99995920182544)(5.5,0.999966597156304)(5.6,0.999972651981831)(5.7,0.99997760928099)(5.8,0.99998166799256)(5.9,0.999984990996806)(6,0.999987711650796)(6.1,0.999989939139396)(6.2,0.99999176285651)(6.3,0.999993255992273)(6.4,0.9999944784701)(6.5,0.999995479351404)(6.6,0.999996298804454)(6.7,0.999996969716367)(6.8,0.999997519012918)(6.9,0.999997968739121)(7,0.999998336943945)(7.1,0.999998638404658)(7.2,0.999998885219883)(7.3,0.999999087295143)(7.4,0.999999252740403)(7.5,0.999999388195546)(7.6,0.999999499096851)(7.7,0.999999589895169)(7.8,0.99999966423455)(7.9,0.999999725098492)(8,0.999999774929676)(8.1,0.999999815728)(8.2,0.999999849130844)(8.3,0.999999876478781)(8.4,0.999999898869378)(8.5,0.999999917201249)(8.6,0.999999932210116)(8.7,0.999999944498337)(8.8,0.999999954559081)(8.9,0.999999962796122)(9,0.999999969540041)(9.1,0.999999975061495)(9.2,0.999999979582079)(9.3,0.99999998328322)(9.4,0.999999986313458)(9.5,0.999999988794407)(9.6,0.999999990825637)(9.7,0.999999992488666)(9.8,0.99999999385024)(9.9,0.999999994965003)(10,0.999999995877693) 
};

\end{axis}

\begin{axis}[%
width=4.52083333333333in,
height=3.565625in,
scale only axis,
xmin=0, xmax=1,
ymin=0, ymax=1,
hide x axis,
hide y axis]
\addplot [
color=black,
dashed,
line width=3.0pt
]
coordinates{
 (0,0.75)(4.5,0.75) 
};

\addplot [
color=black,
dashed,
line width=3.0pt
]
coordinates{
 (0,0.246)(4.5,0.246) 
};
\end{axis}
\end{tikzpicture}}}
\subfigure[$P(\z)$]{\scalebox{0.4}{
%
%
%
%
\begin{tikzpicture}

\begin{axis}[%
width=4.52083333333333in,
height=3.565625in,
scale only axis,
xmin=-3, xmax=3,
xmajorgrids,
ymin=0, ymax=2.5,
ymajorgrids,
xlabel={{\huge $\z$}},
]
\addplot [
color=blue,
solid,
line width=3.0pt
]
coordinates{
 (-3,2.30932850457779)(-2.9,2.20987580037089)(-2.8,2.110543862867)(-2.7,2.0113592312393)(-2.6,1.91235422334971)(-2.5,1.81356816792917)(-2.4,1.71504888677832)(-2.3,1.61685447149571)(-2.2,1.51905540404775)(-2.1,1.42173707411197)(-2,1.32500274735786)(-1.9,1.22897703589493)(-1.8,1.13380991244826)(-1.7,1.03968128986492)(-1.6,0.946806152602485)(-1.5,0.855440171013797)(-1.4,0.765885645728026)(-1.3,0.678497511407724)(-1.2,0.593688971594004)(-1.1,0.51193613920875)(-1,0.433780830483027)(-0.9,0.359830429966129)(-0.8,0.290753560328393)(-0.7,0.227270229358505)(-0.6,0.170135286778086)(-0.5,0.120114506958277)(-0.4,0.0779534853878323)(-0.3,0.0443407699259403)(-0.2,0.0198680718400074)(-0.0999999999999996,0.00499168882164644)(0,0)(0.0999999999999996,0.00499168882164644)(0.2,0.0198680718400074)(0.3,0.0443407699259403)(0.4,0.0779534853878323)(0.5,0.120114506958277)(0.6,0.170135286778086)(0.7,0.227270229358505)(0.8,0.290753560328393)(0.9,0.359830429966129)(1,0.433780830483027)(1.1,0.51193613920875)(1.2,0.593688971594004)(1.3,0.678497511407724)(1.4,0.765885645728026)(1.5,0.855440171013797)(1.6,0.946806152602485)(1.7,1.03968128986492)(1.8,1.13380991244826)(1.9,1.22897703589493)(2,1.32500274735786)(2.1,1.42173707411197)(2.2,1.51905540404775)(2.3,1.61685447149571)(2.4,1.71504888677832)(2.5,1.81356816792917)(2.6,1.91235422334971)(2.7,2.0113592312393)(2.8,2.110543862867)(2.9,2.20987580037089)(3,2.30932850457779) 
};

\end{axis}

\begin{axis}[%
width=4.52083333333333in,
height=3.565625in,
scale only axis,
xmin=0, xmax=1,
ymin=0, ymax=1,
hide x axis,
hide y axis]
\addplot [
color=black,
line width=3.0pt,
dashed
]
coordinates{
 (1,0.9)(0.65,0) 
};

\addplot [
color=black,
line width=3.0pt,
dashed
]
coordinates{
 (0,0.9)(0.35,0) 
};

\end{axis}
\end{tikzpicture}}}
\end{center}
\caption{ Relation between the \emph{flow cost function} $P^{\star}(\mu)$, the \emph{coupling nonlinearity}, here $\psi(\z) = \nabla P(\z) := \tanh(\z)$, and the \emph{coupling function integral} $P(\z)$.}
\label{fig.Correspondence}
\end{figure*}
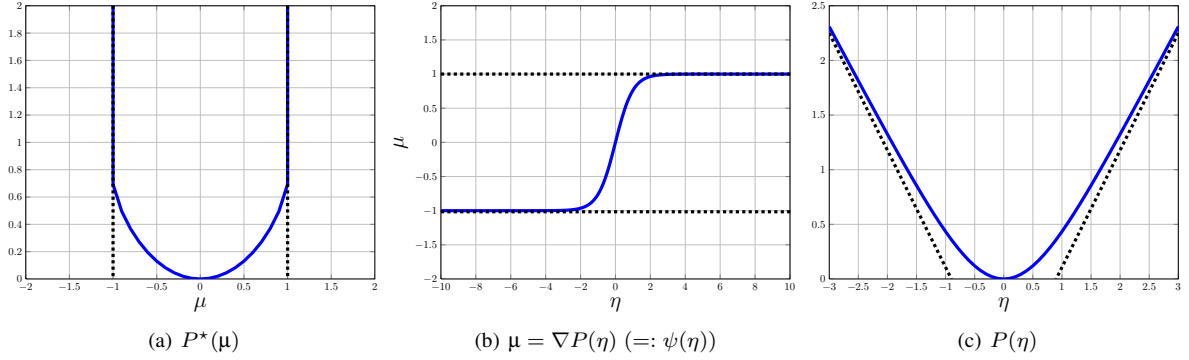

Next, we show that the controller dynamics \eqref{sys.TrafficCoupling} is maximal equilibrium independent passive. Note that the output functions of \eqref{sys.TrafficCoupling} are monotone but bounded. The dynamics \eqref{sys.TrafficCoupling} will only attain a steady state for $\zetad = 0$. However, if $\zetad \neq 0$, the outputs will not grow unbounded, but will approach the saturation bounds of the nonlinearity. Thus, each of the coupling dynamics has the equilibrium input-output relation
\begin{align} \label{eqn.ClusteringIO}
\gammar_{k}(\zetar_{k})  =  \begin{cases}
+1 & \zetar_{k} > 0 \\[-0.5em]
(-1,1) & \zetar_{k} = 0 \\[-0.5em]
-1 & \zetar_{k} < 0.
\end{cases}
\end{align} 
It can be easily verified that $\gammar_{k}$ represents a maximal monotone relation in $\mathbb{R}^{2}$.
To prove now maximal equilibrium independent passivity, we define the integral functions of the coupling nonlinearities, i.e., $P_{k}(\zr_{k}) = \ln \cosh(\zr_{k})$. Note that the functions $P_{k}$ are not strongly convex, as they asymptotically approach an affine function. 
The function $P_{k}$, the coupling nonlinearity  $\psi_{k}(\z_{k}) = \mathrm{tanh}(\z_{k})$, and the convex conjugate $P_{k}^{\star}(\mu_{k})$ are illustrated in Figure \ref{fig.Correspondence}. 

The function $P_{k}$ can now be used to prove maximal equilibrium independent passivity.

\begin{proposition}
Each of the dynamics \eqref{sys.TrafficCoupling} is maximal equilibrium independent passive with equilibrium input output relation \eqref{eqn.ClusteringIO}.
\end{proposition}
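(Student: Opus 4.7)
Following the template already used earlier in the paper for strongly monotone couplings, my approach is to construct, for every admissible pair $(\zetar_k,\mur_k)\in\gammar_k$, a positive semidefinite storage function of the form
\[
W_k(\z_k;\mur_k) \;=\; P_k(\z_k) - \mur_k\,\z_k + c(\mur_k), \qquad P_k(\z_k)=\ln\cosh(\z_k),
\]
where the scalar $c(\mur_k)$ is chosen to enforce $W_k\ge 0$. Differentiating along $\dot{\z}_k=\zeta_k$ gives the clean identity
\[
\dot{W}_k \;=\; \bigl(\tanh(\z_k)-\mur_k\bigr)\zeta_k \;=\; (\mu_k-\mur_k)\,\zeta_k,
\]
so the required dissipation inequality $\dot{W}_k\le(\mu_k-\mur_k)(\zeta_k-\zetar_k)$ collapses to the purely algebraic condition $(\mu_k-\mur_k)\,\zetar_k\le 0$, which I would then verify branch-by-branch using the three cases of \eqref{eqn.ClusteringIO}.

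For the interior branch $\zetar_k=0$, $|\mur_k|<1$, there is a unique $\zr_k^{\star}=\tanh^{-1}(\mur_k)$; choosing $c(\mur_k) = \mur_k\zr_k^{\star}-P_k(\zr_k^{\star})$ makes $W_k$ exactly the Bregman distance associated with the strictly convex $P_k$ between $\z_k$ and $\zr_k^{\star}$, hence nonnegative, and the residual condition $(\mu_k-\mur_k)\cdot 0\le 0$ is trivial. For the boundary branches, $\zetar_k\ne 0$ with $\mur_k=\operatorname{sgn}(\zetar_k)\in\{\pm 1\}$, I would handle the storage function directly: for $\mur_k=+1$, taking $c=\ln 2$ yields the normalization
\[
W_k(\z_k) \;=\; \ln\cosh(\z_k)-\z_k+\ln 2 \;=\; \ln\!\bigl(1+e^{-2\z_k}\bigr)\;\ge\;0,
\]
with the symmetric formula $\ln(1+e^{2\z_k})$ for $\mur_k=-1$. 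The algebraic condition $(\tanh(\z_k)-\mur_k)\,\zetar_k\le 0$ then holds automatically because $\tanh(\z_k)\in(-1,1)$ forces $\tanh(\z_k)-\mur_k$ to carry the opposite sign of $\zetar_k$.

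The main technical obstacle is precisely this boundary regime $|\mur_k|=1$: the controller possesses no genuine equilibrium state at these operating points (the equilibrium is pushed to $\z_k=\pm\infty$), so the Bregman-distance recipe based on a finite $\zr_k^{\star}$ fails outright. The explicit simplification $\ln\cosh(\z_k)-\z_k+\ln 2=\ln(1+e^{-2\z_k})$ is what rescues the argument: it amounts to normalizing by the \emph{asymptotic} infimum of $P_k-\mur_k\z_k$, producing a storage function that is nonnegative but whose infimum is not attained on $\mathbb{R}$. Once this construction is in hand, combining both regimes gives a valid $W_k$ for every $(\zetar_k,\mur_k)\in\gammar_k$, and maximal monotonicity of $\gammar_k$ has already been noted in the excerpt, so the proposition follows.
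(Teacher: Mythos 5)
Your proof is correct and follows essentially the same route as the paper: a Bregman-distance storage function built from $P_k=\ln\cosh$ for the interior pairs ($\zetar_k=0$, $|\mur_k|<1$), and the sign condition $(\mu_k-\mur_k)\zetar_k\le 0$ to upgrade the identity $\dot W_k=(\mu_k-\mur_k)\zeta_k$ to the required dissipation inequality on the boundary branches. The only difference is cosmetic but welcome: where the paper obtains the boundary storage function as the limit of Bregman distances along a sequence $\zr_k^{\ell}\to\pm\infty$, you write that limit down in closed form as $\ln\bigl(1+e^{\mp 2\z_k}\bigr)$, which makes the positive semidefiniteness immediate.
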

\begin{proof}
For any $\zetar_{k} = 0$ and any $\mur_{k} \in (-1,1)$, there is a unique $\zr_{k}$ such that $\mur_{k} = \tanh(\zr_{k})$. The corresponding storage function 
\begin{align} \label{eqn.Bregman}
 W_{k}(\z_{k}(t)) = P_{k}(\z_{k}(t)) - P_{k}(\zr_{k}) - \nabla P_{k}(\zr_{k})(\z(t) - \zr_{k})
 \end{align}
is positive definite. It can be readily seen that in this case $\dot{W} =  (\mu_{k}(t) - \mur_{k})\zeta_{k}(t) =  (\mu_{k}(t) - \mur_{k})(\zeta_{k}(t) - \zetar_{k})$.
Furthermore, if $\zeta_{r} \neq 0$ we can define a sequence $\zr_{k}^{1}, \zr_{k}^{2}, \ldots $ that diverges to $+\infty$ if $\zeta_{r} > 0$ and to $-\infty$ if $\zeta_{r} < 0$. To each $\zr_{k}^{\ell}$ one can define the positive definite function \eqref{eqn.Bregman}, named $W_{k}^{\ell}(\z_{k})$. The sequence of functions $W_{k}^{\ell}$ approaches a positive semidefinite function $\bar{W}_{k}(\z_{k})$ that satisfies
\[ \dot{\bar{W}}_{k} = (\mu_{k}(t) - \mur_{k})\zeta_{k}(t).  \]
Additionally, we note that if $\zetar_{k} > 0$ ($\zetar_{k} < 0$) then $(\mu_{k}(t) - \mur_{k}) \leq0$ ($(\mu_{k}(t) - \mur_{k}) \geq0$) for all $\mu_{k}$.  Thus, it holds that $\zetar_{k}(\mu_{k}(t) - \mur_{k}) \leq 0$. Based on this observation we conclude
\[ \dot{\bar{W}}_{k} \leq (\mu_{k}(t) - \mur_{k})(\zeta_{k}(t) - \zetar_{k}).  \]
Thus, for each $\zetar_{k}$ and $\mur_{k} \in \gamma_{k}(\zetar_{k})$, there exists a positive semidefinite storage function that allows to conclude passivity. 
\end{proof}
Thus, the nonlinear traffic dynamic model \eqref{sys.Traffic1}, \eqref{sys.Traffic2} can be understood as the feedback interconnection of an output strictly maximal equilibrium independent passive system with a maximal equilibrium independent passive controller.

In the network optimization interpretation are the \emph{potential} variables the velocities, i.e., $v_{i}$, and the \emph{divergence} variables are the influence of the other vehicles. 
Furthermore, the \emph{tensions} are the relative positions of the vehicles and the \emph{flows} are their mapping through the coupling functions.
To complete the network theoretic interpretation of the traffic dynamics model, we define the integral function of the input output relation $\gamma_{k}$. 
The integral function of $\gamma_{k}(\zetar_{k})$ is the absolute value of $\zetar_{k}$ and its convex conjugate is the indicator function for the set $ [-1, 1]$, i.e., 
\[ \Gamma_{k}(\zetar_{k}) = |\zetar_{k}|, \quad \Gamma_{k}^{\star}(\mur_{k}) = I_{[-1,1]}(\mur_{k}).\]
Thus, for the traffic dynamics, the two network optimization problems \eqref{prob.GenOFP} and \eqref{prob.GenOPP} take a very characteristic structure.
The optimal flow problem \eqref{prob.GenOFP} is almost identical to \eqref{prob.OFP_Unconstrained}, except that additionally constraints on the flow variables are imposed, i.e., the flows are constrained as $-1 \leq \mur_{k} \leq 1$.
On the other hand, the optimal potential problem \eqref{prob.GenOPP} has a quadratic cost function for the potentials plus an additional absolute value of the potential differences, that can be understood as an $\ell_{1}$-penalty.

\begin{figure*}[t!]
\begin{minipage}{\textwidth}
\begin{minipage}{0.48\textwidth}
\includegraphics[ trim= 2cm 8.5cm 2.5cm 9cm, width = \textwidth]{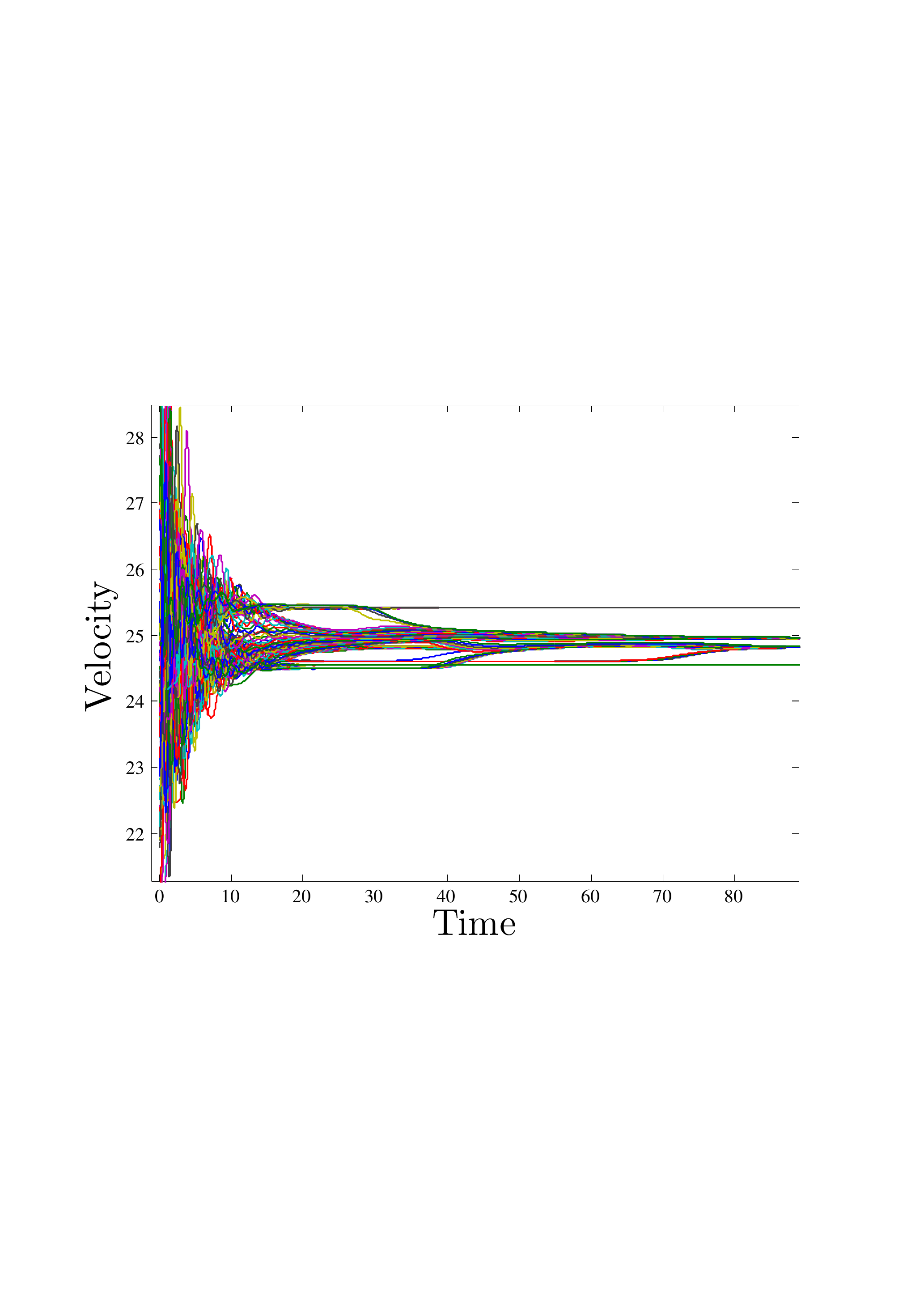}
\end{minipage}
\hfill
\begin{minipage}{0.49\textwidth}
\scalebox{0.6}{
%
%
%
%
\begin{tikzpicture}

\begin{axis}[%
width=4.52083333333333in,
height=3.565625in,
scale only axis,
xmin=0, xmax=100,
xlabel={ {\Large Vehicle Number}},
ymin=23, ymax=26.5,
ylabel={ {\Large Velocity}},
axis on top]

\addplot [
color=blue,
mark size=4pt,
only marks,
mark=square*,
mark options={solid, fill=blue}
]
coordinates{
 (1,24.9619991790571)(2,24.9619990838253)(3,24.9619988551422)(4,24.9619982648045)(5,24.9619979081799)(6,24.9619967465121)(7,24.9619957509172)(8,24.9619944679889)(9,24.9619934093)(10,24.9619909934472)(11,24.9619889524382)(12,24.9619860850079)(13,24.9619835196488)(14,24.961979888268)(15,24.9619744940127)(16,24.9619693348793)(17,24.9619646949074)(18,24.9619608497499)(19,24.9619571932383)(20,24.9619539620336)(21,24.9619521960556)(22,24.9619505571222)(23,24.9619487116969)(24,24.9619472997342)(25,24.9619462351629)(26,24.9619454732507)(27,24.9619447120907)(28,24.9619443338737)(29,24.9619445581865)(30,24.9619440600726)(31,24.9619438076164)(32,24.961943318827)(33,24.9619433004907)(34,24.961942092135)(35,24.961940853388)(36,24.9619388002871)(37,24.9619358968963)(38,24.9619329319966)(39,24.9619295846056)(40,24.9619268043354)(41,24.9619238538476)(42,24.9619196455766)(43,24.9619161135948)(44,24.9619131608735)(45,24.9619103601838)(46,24.9619067145864)(47,24.9619041431187)(48,24.9619023511529)(49,24.9619007215877)(50,24.9618992281467)(51,24.9618973325464)(52,24.9618947048308)(53,24.9618927513775)(54,24.9618919678106)(55,24.9618919216885)(56,24.9618922972901)(57,24.9618932399471)(58,24.9618942028335)(59,24.9618953400309)(60,24.9618955584807)(61,24.9618954279381)(62,24.9618952891463)(63,24.9618956183211)(64,24.961895870658)(65,24.9618959660218)(66,24.9618958778993)(67,24.9618959111101)(68,24.9618963405921)(69,24.9618960303437)(70,24.9618959292979)(71,24.9618957965028)(72,24.9618953043579)(73,24.9618947836886)(74,24.9618945482071)(75,24.9618934228847)(76,24.961893517034)(77,24.9618933695182)(78,24.9618925661137)(79,24.9618918148975)(80,24.9618911001991)(81,24.9618906489046)(82,24.9618887894165)(83,24.9618857485095)(84,24.96188271537)(85,24.9618799065161)(86,24.9618777758025)(87,24.9618736405187)(88,24.9618712239262)(89,24.9618693089633)(90,24.9618676435174)(91,24.9618666215172)(92,24.9618661378702)(93,24.9618653440941)(94,24.9618645790357)(95,24.9618640151826)(96,24.9618635400819)(97,24.9618630956889)(98,24.9618630566908)(99,24.9618630980926) 
};
\addlegendentry{$\sigma^{0} = 1$}

\addplot [
color=red,
only marks,
mark size=4pt,
mark=*,
mark options={solid, fill=red}
]
coordinates{
 (1,25.4099696461277)(2,25.4099696410878)(3,25.4099696213093)(4,25.4099695152627)(5,25.4099695080385)(6,25.4099692682362)(7,25.4099691244156)(8,25.4099689178977)(9,25.409968813063)(10,25.4099681591659)(11,25.4099678128099)(12,25.4099670927749)(13,25.4099666560779)(14,25.409965638807)(15,24.8159271147601)(16,24.8159139067431)(17,24.8159108273287)(18,24.8159096782795)(19,24.815908683203)(20,24.8159079940467)(21,24.815907847032)(22,24.8159077250469)(23,24.8159075415976)(24,24.8159074591377)(25,24.8159074552179)(26,24.8159075254368)(27,24.8159075898601)(28,24.8159077762913)(29,24.8159083717888)(30,24.8159085045169)(31,24.8159087283052)(32,24.8159088603008)(33,24.8159092087054)(34,24.8159091402974)(35,24.8159090628698)(36,24.8159087482374)(37,24.8159079629043)(38,24.8159071227529)(39,24.8159057797891)(40,24.8159050975676)(41,24.8159042688201)(42,24.8158360288323)(43,24.8158341367371)(44,24.8158332718335)(45,24.8158325297926)(46,24.8158298473518)(47,24.815829248797)(48,24.8158289994227)(49,24.8158287951427)(50,24.8158286233346)(51,24.8158283087459)(52,24.8158275408195)(53,24.8158271798678)(54,24.8158271758257)(55,24.8158274158047)(56,24.8158279491809)(57,24.8158310663297)(58,24.8158343417551)(59,24.9586765164155)(60,24.9586768786324)(61,24.9586770600025)(62,24.9586772425827)(63,24.9586777152887)(64,24.9586781313555)(65,24.9586784441152)(66,24.9586786656845)(67,24.958678953578)(68,24.9586796017419)(69,24.9586797449548)(70,24.958679975847)(71,24.9586801954979)(72,24.9586802848075)(73,24.9586803681567)(74,24.9586805534648)(75,24.9586804768559)(76,24.9586808536046)(77,24.9586810960294)(78,24.9586811148237)(79,24.9586811513208)(80,24.9586812030564)(81,24.958681339382)(82,24.9586810741974)(83,24.9586800245283)(84,24.9586789981636)(85,24.958678232292)(86,24.9586778752884)(87,24.5502385481745)(88,24.5502379881036)(89,24.5502376450463)(90,24.5502373667887)(91,24.5502372688992)(92,24.5502373044198)(93,24.5502372314561)(94,24.5502371408749)(95,24.5502370899152)(96,24.5502370476682)(97,24.5502369957174)(98,24.550237046825)(99,24.5502371022414) 
};
\addlegendentry{$\sigma^{0} = 2.5$}

\definecolor{mygreen}{rgb}{0,0.5.0}
\addplot [
color=mygreen,
mark size=4pt,
only marks,
mark=triangle*,
mark options={solid, fill=mygreen}
]
coordinates{
 (1,26.0668524377089)(2,26.0668524730033)(3,26.0668525081439)(4,26.0668522420029)(5,26.0668524600947)(6,26.0668517962131)(7,26.0668515924353)(8,26.0668512256456)(9,26.0668512842947)(10,26.0668495356863)(11,26.0668489858963)(12,26.0668475277501)(13,26.0668469097513)(14,26.0668451836171)(15,24.3602990935794)(16,24.2180794704684)(17,24.2180657297105)(18,24.2180627542626)(19,24.2180598037546)(20,24.2180578682182)(21,24.2180577781055)(22,24.2180577172146)(23,24.2180573543566)(24,24.2180573050686)(25,24.2180575112814)(26,24.218057997962)(27,24.2180583579628)(28,24.2180593192963)(29,25.1995616128084)(30,25.1995622101949)(31,25.199563658316)(32,25.1995644569417)(33,25.1995740966428)(34,25.1995740714612)(35,25.1995740847423)(36,25.1995733101838)(37,25.19957048471)(38,25.1995679941865)(39,25.1995633851822)(40,25.1995621037761)(41,25.1995606672365)(42,24.3764684473404)(43,24.3764646687601)(44,24.3764630243043)(45,24.3764614837192)(46,24.1914833282238)(47,24.1914817621152)(48,24.191481308264)(49,24.1914809171932)(50,24.1914805670393)(51,24.191479449955)(52,23.3896951280966)(53,23.3896924639953)(54,23.3896923119661)(55,23.3896929372014)(56,23.3896944968132)(57,24.0611160156399)(58,24.0611259297333)(59,25.3578896156744)(60,25.3578900209142)(61,25.3578900149983)(62,25.3578900685506)(63,25.3578910303751)(64,25.3578919103732)(65,25.3578925399044)(66,25.3578929255225)(67,25.3578936293474)(68,25.3578978410362)(69,25.3578981302787)(70,25.3578988914646)(71,25.3578997068341)(72,25.3578999788209)(73,25.3579002986946)(74,25.3579013084398)(75,25.3579010465671)(76,25.3579202464286)(77,25.3579231723978)(78,25.3579235319823)(79,25.3579240830554)(80,25.3579248566017)(81,25.3579270405893)(82,25.3579266252465)(83,25.3579233691572)(84,25.357920854578)(85,25.3579194327854)(86,25.3579190513234)(87,23.852068802152)(88,23.8520679764084)(89,23.8520675582552)(90,23.8520672308287)(91,23.8520674057587)(92,23.8520682754786)(93,23.8520684323494)(94,23.8520684523824)(95,23.8520685786903)(96,23.8520686903288)(97,23.8520687036216)(98,23.8520691252131)(99,23.8520694894524) 
};
\addlegendentry{$\sigma^{0} = 4$}
\end{axis}
\end{tikzpicture} }
\end{minipage}
\end{minipage}
\caption{Simulation results for a traffic flow model with 100 vehicles placed on a line graph. \emph{Left:} Time trajectories of the velocities for normally distributed coefficients with $\sigma^{0} = 2.5$ and $\sigma^{1} =1$.
\emph{Right:} Asymptotic velocities predicted by the network optimization problems for $\sigma^{0} = 1$ (blue, '$\square$'), $\sigma^{0} = 2.5$ (red, 'o'), and $\sigma^{0} = 4$ (green, '$\Delta$'). }
\label{fig.TrafficFlow}
\end{figure*}

\begin{remark}[{\small Network Clustering}] The connection of the presented results to the network clustering analysis presented in \cite{Burger2011a} can be explained on the traffic dynamics. 
In \cite{Burger2011a} a \emph{saddle-point problem} is proposed to analyze and predict an asymptotic clustering behavior. 
%
%
The saddle-point problem of \cite{Burger2011a} for the traffic dynamics studied here (using the notation of this paper) is
\begin{align*} 
\begin{split}
\max_{\mur_{k}} \; \min_{\yr_{i}} \; \mc{L}(\yb, \mub) := &\sum_{i=1}^{|\Nodeset|}  \frac{1}{2V_{i}^{1}}(\yr_{i} - V_{i}^{0})^{2}   \; + \; \mub^{\top} E^{\top}\yb \\
&-1 \leq \mur_{k} \leq 1.
\end{split}
\end{align*}
Some straight forward manipulations reveal that the saddle-point problem results in fact from the Lagrange dual of \eqref{prob.GenOPP} for the traffic dynamics model. The saddle point problem involves variables from the two dual network optimization problems, i.e., the potential variables $\yr_{i}$ and the flow variables $\mur_{k}$.
It has been shown in \cite{Burger2011a} that the solutions to the saddle-point problem eventually have a clustered structure. Thus, one might expect a clustering behavior to happen in the traffic dynamics model and this behavior can in fact be observed in the simulations. We refer to \cite{Burger2011a} for a detailed treatment of the clustering phenomenon.
\end{remark}

We present a computational study  with 100 vehicles placed on a line graph in Figure \ref{fig.TrafficFlow}. The sensitivity parameter is $\kappa = 0.6$ for all vehicles, while the parameters $V_{i}^{0}$ and $V_{i}^{1}$ are chosen as a common nominal parameter plus a random component, i.e., $V_{i}^{0} =  V_{nom}^{0} + V_{i,rand}^{0}$.  The common off-set is $V_{nom}^{0} = 25 \frac{m}{s}$ and $V_{1} = 10 \frac{m}{s}$. The random component is chosen according to a zero mean normal distribution with different standard deviations.
In Figure \ref{fig.TrafficFlow} (left), the time-trajectories of the velocities $v_{i}$ are shown with the random coefficients $V_{i,rand}^{0}, V_{i,rand}^{1}$ chosen from a distribution with $\sigma^{0} = 2.5$ and $\sigma^{1} = 1$, respectively.
Figure \ref{fig.TrafficFlow} (right) shows the asymptotic velocity distribution of the traffic for different choices of the standard deviation $\sigma^{0}$. While for $\sigma^{0} =1$ the traffic agrees on a common velocity, already for $\sigma^{0} = 2.5$ a clustering structure of the network can be seen. The clustering structure becomes more refined for $\sigma^{0}=4$. We have chosen for all studies $\sigma^{1} =1$.
Please note that the novel network theoretic framework provides us with efficient tools to analyze and predict the non-trivial asymptotic behavior of the nonlinear traffic dynamics, without the need to simulate the system for different parameter configurations.

\section{Conclusions} \label{sec.Conclusion}

We have established in this paper an intimate connection between passivity-based cooperative control and the network optimization theory of Rockafellar \cite{Rockafellar1998}. 
To obtain this connection, we introduced the notion of maximal equilibrium independent passivity as a variation of the equilibrium independent passivity concept of \cite{Hines2011}.
It was shown that dynamical networks involving maximal equilibrium independent passive systems asymptotically approach the solutions of several network optimization problems.
For output agreement problems we have shown that it is a necessary condition that the output agreement steady-state is optimal with respect to an optimal flow and an optimal potential problem. This connection provided also an interpretation of the system outputs as potential variables, and of the system inputs as node divergence.
Similar inverse optimality and duality results are established for general networks of maximal equilibrium independent passive systems, that do not necessarily converge to output agreement. The general theory was illustrated on a nonlinear traffic dynamics model that shows asymptotically a clustering behavior.
As maximal equilibrium independent passive systems admit a certain inverse optimality and a strong duality, the presented results suggest that networks consisting of those systems are ``well-behaved'' and ``easy'' to be analyzed.
We believe that this result contributes to a unified understanding of networked dynamical systems and opens the way for further advanced analysis methods.

%
%
%
%
%
%

{\footnotesize
 \bibliographystyle{IEEEtran}
\bibliography{DualConstraints}
}

\end{document}